\numberwithin{equation}{section}
\numberwithin{figure}{section}
\tikzset{rectangle/.style={draw,circle,inner sep=1pt},
big arrow/.style={decoration={markings,mark=at position 1 with {\arrow[scale=1.5,#1]{>}}},postaction={decorate},shorten >=0.4pt},
scale cd/.style={every label/.append style={scale=#1}, cells={nodes={scale=#1}}}}
\DeclareRobustCommand{\loongrightarrow}{\DOTSB\relbar\joinrel\relbar\joinrel\rightarrow}
\theoremstyle{plain}
\newtheorem*{thm*}{Theorem}
\newtheorem{thm}{Theorem}[section]
\newtheorem{lem}[thm]{Lemma}
\newtheorem{prop}[thm]{Proposition}
\theoremstyle{definition}
\newtheorem*{defn*}{Definition}
\newtheorem{rem}[thm]{Remark}
\newtheorem*{t:main}{Theorem \ref{t:main}}
\newtheorem*{t:approx}{Theorem \ref{t:approx}}
\newcommand{\mc}[1]{\mathcal{#1}}
\newcommand{\A}{\mathcal{A}}
\newcommand{\B}{\mathcal{B}}
\newcommand{\cH}{\mathcal{H}}
\newcommand{\K}{\mathcal{K}}
\newcommand{\BH}{\mc{B}(\mathcal{H})}
\newcommand{\BK}{\mc{B}(\mathcal{K})}
\newcommand{\C}{\mathbb{C}}
\newcommand{\R}{\mathbb{R}}
\newcommand{\N}{\mathbb{N}}
\newcommand{\la}{\langle}
\newcommand{\ra}{\rangle}
\newcommand{\ten}{\otimes}
\newcommand{\ep}{\varepsilon}
\newcommand{\id}{\textnormal{id}}
\newcommand{\om}{\omega}
\newcommand{\Om}{\Omega}
\newcommand{\vphi}{\varphi}
\providecommand{\norm}[1]{\lVert#1\rVert}
\begin{document}

\begin{titlepage}
 \vspace*{-3cm} 
 \begin{flushright}
 \end{flushright}
\begin{center}
\vspace{2cm}
{\LARGE\bfseries Algebraic approach to spacetime bulk reconstruction}
\vspace{1.2cm}

{\large Jason Crann$^{1}$ and Monica Jinwoo Kang$^{2}$\\}
\vspace{.7cm}
{$^1$ School of Mathematics \& Statistics, Carleton University}\\
{Ottawa, ON H1S 5B6, Canada}\par
\vspace{.2cm}
{$^2$ Department of Physics and Astronomy, University of Pennsylvania}\\
{Philadelphia, PA 19104, U.S.A.}\par
\vspace{.5cm}

\scalebox{1}{\tt jasoncrann@cunet.carleton.ca, monica6@sas.upenn.edu}\par
\vspace{2cm}
\textbf{Abstract}
\end{center}

Motivated by the theory of holographic quantum error correction in the anti-de Sitter/conformal field theory (AdS/CFT) correspondence, together with the kink transform conjecture on the bulk AdS description of boundary cocycle flow, we characterize (approximate) complementary recovery in terms of (approximate) intertwining of bulk and boundary cocycle derivatives. Using the geometric modular structure in vacuum AdS, we establish an operator algebraic subregion-subregion duality of boundary causal diamonds and bulk causal wedges for Klein-Gordon fields in the universal cover of AdS. Our results suggest that, from an algebraic perspective, the kink transform is bulk cocycle flow, which (in the above case) induces the bulk geometry via geometric modular action and the corresponding notion of time.
As a by-product, we find that if the von Neumann algebra of a boundary CFT subregion is a type $\mathrm{III}_1$ factor with an ergodic vacuum, then the von Neumann algebra of the corresponding dual bulk subregion, is either $\mathbb{C}1$ (with a one-dimensional Hilbert space) or a type $\mathrm{III}_1$ factor.

\vfill 
\end{titlepage}

\tableofcontents
\newpage

\section{Introduction}\label{sec:intro}


Over the past decade, perspectives from quantum information have shed valuable insight into the conjectured anti-de Sitter space/conformal field theory (AdS/CFT) correspondence \cite{Maldacena}. Its quantum field theoretic formulation (see, e.g., \cite{Witten,Hamilton:2006az}), in the semi-classical limit, postulates a duality between a quantum field theory coupled to gravity in (the universal cover of) $(d+1)$-dimensional anti-de Sitter space, $AdS_{d+1}$, and a conformal field theory on its $d$-dimensional conformal boundary $\partial AdS_{d+1}$. This duality leads to a correspondence between bulk geometry (coupled to fields) and boundary quantum information (in CFT states/observables). 

A primary example is the Ryu--Takayanagi formula \cite{RT06}, which connects the entanglement entropy of a state over a boundary region $A\subseteq\partial AdS_{d+1}$ to the area of a certain minimal surface $\gamma$ in the bulk anchored at $A$ (see Figure \ref{fig:bulkreconstruction} below). Another example is subregion-subregion duality, in which local bulk observables are encoded as quantum error-correcting codes on a subspace of the boundary CFT Hilbert space and bulk AdS locality emerges from complementary recovery of the encoding \cite{Dong:2016eik,Almheiri:2014lwa,Harlow:2016vwg}. This novel quantum error correction perspective has been steadily developed in recent years, and continues to shed insight into aspects of AdS/CFT, including the semi-classical limit of quantum gravity and the emergence of spacetime structure (see, e.g., \cite{Gesteau:2020wrk,Harlow:2018fse,Almheiri:2019psf,Penington:2019npb,Almheiri:2019hni}).

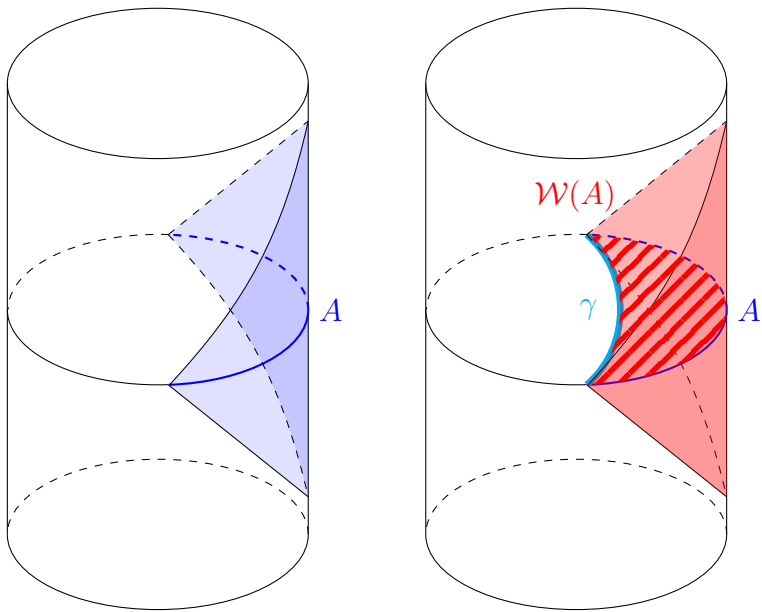
\begin{figure}[H]
    \centering
    \vspace{5pt}
    \begin{tikzpicture}
        \draw (-2,3)--(-2,-3);
        \draw (2,3)--(2,-3);
        \draw (0,3) ellipse (2cm and 1cm);
        \draw[dashed] (2,-3) arc (0:180:2cm and 1cm);
        \draw (-2,-3) arc (180:360:2cm and 1cm);
        \draw[dashed] (.15,.99) arc (86:180:2cm and 1cm);
        \draw (-2,-.01) arc (180:274:2cm and 1cm);
        \draw[blue,thick] (.14,-1.01) arc (-86:0:2cm and 1cm);
        \draw[blue,thick,dashed] (2,-.01) arc (0:86:2cm and 1cm);
        \coordinate (P1) at ($(0,-0.01)+(86:2cm and 1cm)$);
        \coordinate (P2) at ($(0,-0.01)+(-86:2cm and 1cm)$);
        \coordinate (P22) at ($(0.01,-0.02)+(-86:2cm and 1cm)$);
        \coordinate[label={[blue]0:$A$}] (A) at (2,-0.01);
        \draw[dashed] (P1) to (2,2.5);
        \draw (P22) to[bend right=15] (2,2.5);
        \draw[dashed] (P1) to[bend left=15] (2,-2.5);
        \draw (P22) to (2,-2.5);
        \path[draw=none, fill=blue, opacity=0.12] (P22) to[bend right=15] (2,2.5) to (2,-2.5) to (P22);
        \path[draw=none, fill=blue, opacity=0.12] (P1) to[bend left=15] (2,-2.5) to (2,2.5) to (P1);
    \end{tikzpicture}
    \qquad
    \begin{tikzpicture}
        \draw (-2,3)--(-2,-3);
        \draw (2,3)--(2,-3);
        \draw (0,3) ellipse (2cm and 1cm);
        \draw[dashed] (2,-3) arc (0:180:2cm and 1cm);
        \draw (-2,-3) arc (180:360:2cm and 1cm);
        \draw[dashed] (.15,.99) arc (86:180:2cm and 1cm);
        \draw (-2,-.01) arc (180:274:2cm and 1cm);
        \draw[blue,thick] (.14,-1.01) arc (-86:0:2cm and 1cm);
        \draw[blue,thick,dashed] (2,-.01) arc (0:86:2cm and 1cm);
        \coordinate (P1) at ($(0,-0.01)+(86:2cm and 1cm)$);
        \coordinate (P2) at ($(0,-0.01)+(-86:2cm and 1cm)$);
        \coordinate (P22) at ($(0.01,-0.02)+(-86:2cm and 1cm)$);
        \coordinate[label={[blue]0:$A$}] (A) at (2,-0.01);
        \coordinate[label={[cyan]0:$\gamma$}] (G) at (-.1,-0.01);
        \draw[cyan,line width=2.5pt] (P1) to[bend left=50] (P2);
        \draw[dashed] (P1) to (2,2.5);
        \draw (P22) to[bend right=15] (2,2.5);
        \draw[dashed] (P1) to[bend left=15] (2,-2.5);
        \draw (P22) to (2,-2.5);
        \path[draw=none, fill=red, opacity=0.4] (P22) to[bend right=15] (2,2.5) to (2,-2.5) to (P22);
        \path[draw=none, fill=red, opacity=0.3] (P1) to[bend left=50] (P2) to[bend right=15] (2,2.5) to (P1);
        \draw[draw=none, pattern={Lines[angle=45,distance={8pt/sqrt(2)},line width=2pt]},pattern color=red] (P1) to[bend left=50] (P2) to (.14,-1.01) arc (-86:86:2cm and 1cm);
        \coordinate[label={[red]0:$\mathcal{W}(A)$}] (W) at (-.7,1.5);
    \end{tikzpicture}
    \caption{Geometric depiction of the Ryu--Takayanagi surface $\gamma$ and the reconstructable bulk wedge $\mathcal{W}(A)$ of a boundary region $A$ under subregion-subregion duality.}
    \label{fig:bulkreconstruction}
\end{figure}

However, the vast majority of (rigorous) examples witnessing this error correcting structure are finite-dimensional, often realized via tensor networks (e.g., \cite{Happy,JE}). Although such toy models have revealed various holographic features expected of quantum gravity, they are nevertheless discrete and can be difficult to analyze in connection with the continuous nature of quantum field theory.\footnote{There are tensor network models that addresses its settings in infinite-dimensional Hilbert spaces with its attempt to capture some continuous behavior. For examples, see \cite{Kang:2019dfi,Gesteau:2020rtg,Gesteau:2020hoz,SpacetimeTN,SpacetimeTN2,Gesteau:2021jzp}.}

Given the infinite-dimensional/operator-algebraic nature of quantum fields, it is natural to study the quantum error correcting nature of AdS/CFT directly by combining the frameworks of operator algebra quantum error correction \cite{BKK1,BKK2} and algebraic quantum field theory \cite{Haag}. This approach was taken in \cite{KK}, which generalized the results of \cite{Harlow:2016vwg} to arbitrary von Neumann algebras. Their work has since been expanded and developed in \cite{Gesteau:2020rtg,Faulkner:2020hzi,Gesteau:2021jzp}. In these works, the equivalence between (exact) bulk reconstruction (i.e., complementary recovery) and the equality of bulk and boundary relative entropies are shown using operator-algebraic techniques. 
Note also the recent preprint which establishes a Ryu-Takayangi type formula for quantum systems modelled on factors of type $\mathrm{I}$ or $\mathrm{II}$ \cite{XZ}.

In this work, we build on this paradigm by giving an operator-algebraic description of the spacetime aspect of bulk reconstruction. Specifically, if $V:\cH\hookrightarrow\K$ is an isometry between the bulk and boundary Hilbert spaces, and $\B$ and $\A$ are bulk and boundary von Neumann algebras on $\K$ and $\cH$, respectively, we show that complementary recovery of $\B$ is equivalent to $V$ intertwining all cocycle derivatives of vector states on $\B$. Viewing cocycle conjugation as a local extension of modular flow (valid only on a fixed state, see Remark \ref{r:rem}), we arrive at a (state-dependent) localized time reconstruction compatible with modular time and subregion-subregion duality. The precise statement is Theorem \ref{t:main}, copied below (notation defined and explained in detail in Section \ref{s:modular}):

\begin{thm}\label{t:main} 
Let $V:\cH\hookrightarrow\K$ be an isometry between Hilbert spaces $\cH$ and $\K$. Let $\A$ and $\B$ be von Neumann algebras on $\K$ and $\cH$, respectively. Suppose there is a cyclic and separating state vector $\Omega\in\cH$ for $\B$ such that $V\Omega\in\K$ is cyclic and separating for $\A$. Then the following conditions are equivalent:
\begin{enumerate}
    \item (Complementary Recovery) $\B$ and $\B'$ are correctable subalgebras for the complementary channels $V^*(\cdot)V:\A\to\BH$ and $V^*(\cdot)V:\A'\to\BH$, respectively.
    \item (Preservation of Connes Cocycle Flow) For all states $\psi\in\cH$, we have $$V[D\om_{\Om}:D\om_\psi]_t=[D\om_{V\Om}:D\om_{V\psi}]_tV, \ \  V[D\om_{\Om}':D\om_{\psi}']_t=[D\om_{V\Om}':D\om_{V\psi}']_tV, \ \ \ t\in\R.$$
    \item (Preservation of Relative Entropy) For all states $\psi\in\cH$, 
    $$S_{\A}(\om_{V\psi},\om_{V\Om})=S_{\B}(\om_{\psi},\om_{\Om}) \ \ \ S_{\A'}(\om_{V\psi}',\om_{V\Om}')=S_{\B'}(\om_{\psi}',\om_{\vphi}').$$
\end{enumerate}
\end{thm}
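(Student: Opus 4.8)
The plan is to route all three equivalences through condition $(2)$, treating the intertwining of Connes cocycles as the central object: I would prove $(1)\Leftrightarrow(2)$ by a largely algebraic argument and $(2)\Leftrightarrow(3)$ through the analytic relationship between the cocycle and Araki's relative entropy. The cyclic and separating hypotheses on $\Om$ and $V\Om$ are essential throughout, since they guarantee that the modular and relative modular operators $\Delta_\Om,\ \Delta_{\psi,\Om}$ and their boundary counterparts exist, so that each cocycle $[D\om_\Om:D\om_\psi]_t$ is a genuine $\sigma$-weakly continuous family of unitaries in $\B$ and the relative entropies in $(3)$ are well defined.

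For $(1)\Leftrightarrow(2)$, the key observation is that $(2)$ says precisely that each cocycle unitary $u_t=[D\om_\Om:D\om_\psi]_t\in\B$ is reconstructed on $\A$ by the boundary cocycle $w_t=[D\om_{V\Om}:D\om_{V\psi}]_t\in\A$, in the intertwining sense $Vu_t=w_tV$, with the primed identities doing the same for $\B'$ on $\A'$. Going from $(1)$ to $(2)$, correctability of $\B$ for $\Phi_\A=V^*(\cdot)V$ supplies a recovery that may be taken to be the Petz map of $\Phi_\A$ relative to $\om_{V\Om}$; since this map intertwines the relative modular structures of $(\A,V\Om)$ and $(\B,\Om)$, it carries the bulk cocycle to the boundary cocycle, which is $(2)$. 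Conversely, I would show that the unitaries $\{[D\om_\Om:D\om_\psi]_t:\psi,\ t\in\R\}$ generate $\B$ as a von Neumann algebra (this is where faithfulness of $\om_\Om$ enters) and that the relation $Vu_t=w_tV$ is stable under products, adjoints, linear combinations and $\sigma$-weak limits; reconstructability then propagates from the generating cocycles to all of $\B$, and symmetrically for $\B'$, giving complementary recovery.

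For $(2)\Leftrightarrow(3)$, the bridge is the standard formula recovering relative entropy from the cocycle by differentiation: with $[D\om_\psi:D\om_\Om]_t=\Delta_{\psi,\Om}^{it}\Delta_\Om^{-it}$ and $S_\B(\om_\psi,\om_\Om)=-\la\psi,\log\Delta_{\Om,\psi}\,\psi\ra$, the map $t\mapsto[D\om_\Om:D\om_\psi]_t$ extends analytically to a strip and its derivative at $t=0$ encodes $\log\Delta$. The implication $(2)\Rightarrow(3)$ is the easy half: apply $V$ to the bulk cocycle, use the intertwining to replace it by the boundary cocycle, differentiate at $t=0$---with the analyticity and the boundedness of $V$ justifying the interchange---and read off both relative-entropy identities.

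The hard half, and the step I expect to be the main obstacle, is $(3)\Rightarrow(2)$. Morally this is the equality case of the monotonicity of relative entropy: saturation for every $\psi$ should force the channel to be sufficient, and sufficiency is equivalent to preservation of the Connes cocycle. The difficulty is twofold. First, one must run the equality-in-data-processing argument for arbitrary von Neumann algebras, beyond the finite-dimensional or semifinite settings, which requires the sufficiency theory of Petz and Jen\v{c}ov\'a rather than an elementary density-matrix computation. Second, and more delicately, condition $(3)$ equates the relative entropies of two \emph{different} algebras, $\A$ and $\B$, rather than a state and its image under a fixed channel; realizing this equality as the saturation of a genuine data-processing inequality requires first producing the channel $\Phi_\A\colon\A\to\B$---which is not available a priori, since $V^*\A V\subseteq\B$ only once $\B'$ is reconstructed on $\A'$---and then ensuring that the recovered copies of $\B$ and $\B'$ are mutually commuting, so that the two sufficiencies assemble into a single complementary-recovery structure rather than two unrelated reconstructions. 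I would attempt to break this circularity by working with both algebra and commutant simultaneously, using the cyclic and separating vectors $\Om$ and $V\Om$ together with Takesaki's theorem on modular-invariant conditional expectations to build the relevant channel and to glue the two halves through the shared modular flow of $(\A,V\Om)$.
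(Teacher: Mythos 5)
Your overall architecture (using (2) as the hub, proving $(1)\Leftrightarrow(2)$ and $(2)\Leftrightarrow(3)$) is a legitimate alternative to the paper's cycle $(1)\Rightarrow(2)\Rightarrow(3)\Rightarrow(1)$, in which the closing implication is delegated to \cite[Theorem 1.1]{KK} after checking that cyclic and separating vectors are dense in $\cH$. Your $(1)\Rightarrow(2)$ follows essentially the paper's sufficiency route (a recovery satisfying $\mc{R}(b)V=Vb$, obtained via the privacy--correctability duality and Arveson's commutant lifting, makes $\mc{R}\circ V^*(\cdot)V$ a conditional expectation preserving all $\om_{V\psi}$, whence the boundary cocycles lie in $\mc{R}(\B)$ and are identified with the bulk ones by KMS uniqueness), and your $(2)\Rightarrow(1)$ mirrors what the paper does only in the approximate setting. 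However, three steps have genuine gaps as written. First, in $(2)\Rightarrow(3)$ you differentiate the cocycle at $t=0$; the derivative formula for relative entropy requires $S_{\B}(\om_\psi,\om_\Om)<\infty$, so this does not establish the required equality when both sides are infinite. The paper's Remark~\ref{r:r1} flags exactly this and circumvents it by analytically continuing $t\mapsto\Delta(\om_\Om/\om'_\psi)^{it}\psi$ to the strip $-1/2\le\operatorname{Im}z\le 0$, obtaining $V\Delta(\om_\Om/\om'_\psi)^{t/2}\psi=\Delta(\om_{V\Om}/\om'_{V\psi})^{t/2}V\psi$ and then using $S=-\lim_{t\to 0^+}t^{-1}(\norm{\Delta^{t/2}\psi}^2-1)$.

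Second, in $(2)\Rightarrow(1)$ the claimed stability of the relation $Vb=aV$ under $\sigma$-weak limits is unjustified: to pass a bounded net $b_i\to b$ to the limit you need the lifted elements $a_i\in\A$ to stay bounded, and on the linear span of the group generated by the cocycle unitaries the only available bound on $\norm{a}$ is the $\ell^1$-norm of the coefficients, not $\norm{b}$. The repair is to prove privacy rather than propagate the intertwining: for $a'\in\A'$ and a cocycle unitary $u$ with $Vu=wV$ (hence $uV^*=V^*w$) one checks $[V^*a'V,u]=V^*(a'w-wa')V=0$; since the cocycle unitaries generate $\B$ (the paper's Lemma~\ref{l:cocycle}, which itself requires Connes' theorem on cocycles of weights plus Takesaki's theorem, not a routine fact), this gives $V^*\A'V\subseteq\B'$ and symmetrically $V^*\A V\subseteq\B$, after which correctability follows from \cite[Theorem 4.7]{CKLT}. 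Third, condition (2) is asserted for \emph{all} states $\psi$, but sufficiency theory only yields the cocycle identity for separating $\psi$; for general $\psi$ the cocycle is a partial isometry supported on $s(\om_\psi)$, and the paper needs a separate argument (the identity $Vs(\om_\psi)=s(\om_{V\psi})V$ together with the faithful perturbation $\tilde\om=\tfrac12(\om_\psi+\om)$) that your sketch omits. Finally, your $(3)\Rightarrow(2)$ correctly diagnoses the obstacles but remains a plan rather than a proof; the paper simply invokes \cite[Theorem 1.1]{KK} at this point.
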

The above result also suggests that the kink transform of \cite{Bousso}, which they conjecture to be the bulk geometric description of boundary cocycle flow, is precisely bulk cocycle flow, at least in exact recovery settings. As we show in Section \ref{s:4}, exact complementary recovery is satisfied for
$$\mc{A}:=\pi_{\om}(\mc{A}_{bd}(\mc{D}))''\quad\text{and}\quad\mc{B}:=\pi_{\om}(\mc{A}_{bulk}(\mc{W}))'',$$
where $\mc{A}_{bd}(\mc{D})$ and $\mc{A}_{bulk}(\mc{W})$ are Weyl $C^*$-algebras associated to Klein-Gordon fields over boundary causal diamonds $\mc{D}$ and causal wedges $\mc{W}$ in the universal cover of AdS, where $\omega$ is the global AdS vacuum (see Theorem \ref{t:KG} and the surrounding discussion for a detailed statement). This result not only proves operator algebraic subregion-subregion duality for such regions, the geometric implementations of their bulk and boundary modular flows \cite{BEM,BGL} match the geometric nature of the kink transform, providing additional evidence for the conjecture of \cite{Bousso}. 

To consider the higher-order effect in $G_N$, one has to incorporate non-pertubative gravity corrections that require approximate recovery \cite{Kelly:2016edc}, in comparison to the exact recovery of Theorem \ref{t:main}. The approximate recovery in infinite-dimensional Hilbert spaces has been discussed in \cite{Gesteau:2021jzp} through an operator-algebraic approach, where the reconstruction errors are controlled by a function of $N$, which vanishes in the large $N$ limit. A weaker version of an approximate recovery was recently considered in \cite{FL}, using asymptotically isometric encodings. In a similar spirit, our final result is an approximate version of Theorem \ref{t:main} (see $\S3.3$ for the relation with \cite{FL}):

\begin{thm}\label{t:approx} 
Let $(V_n:\cH\hookrightarrow\K_n)_{n\in\N}$ be a sequence of isometries between Hilbert spaces $\cH$ and $\K_n$. Let $\A_n$ and $\B$ be von Neumann algebras on $\K_n$ and $\cH$, respectively. Suppose there is a cyclic and separating state vector $\Omega\in\cH$ for $\B$ such that $V_n\Omega\in\K_n$ is cyclic and separating for each $\A_n$. Consider the following conditions:
\begin{enumerate}
\item There exist sequences $(R_n:\B\to\A_n)_{n\in \N}, (R_n':\B'\to\A_n')_{n\in \N}$ of channels such that
$$wot-\lim_nV_n^*R_n(b)V_n=b, \ \ wot-\lim V_n^*R'_n(b')V_n=b'$$
for all $b\in\B$ and $b'\in\B'$.
\item For all cyclic and separating states $\psi\in\cH$ (for $\B$), we have 
\begin{align*}
sot-\lim_nV_n[D\om_{\Om}:D\om_\psi]_t&-[D\om_{V_n\Om}:D\om_{V_n\psi}]_tV_n=0\\
sot-\lim_nV_n[D\om_{\Om}':D\om_{\psi}']_t&-[D\om_{V_n\Om}':D\om_{V_n\psi}']_tV_n=0, \ \ \ t\in\R.
\end{align*}
\item For all bounded sequences $(a_n)\in\prod_{n\in\N}\A_n$, $(a'_n)\in\prod_{n\in\N}\A'_n$, and  and $b\in\B$, $b'\in\B$,
$$wot-\lim_n[V_n^*a_nV_n,b']=0, \ \ \ wot-\lim_n[V_n^*a'_nV_n,b]=0.$$
\end{enumerate}
Then $(1)\Rightarrow(2)\Rightarrow(3)$, and if $\B$ is hyperfinite, the three conditions are equivalent. 
\end{thm}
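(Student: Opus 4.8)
The plan is to prove the cycle $(1)\Rightarrow(2)\Rightarrow(3)\Rightarrow(1)$, with only the last implication drawing on hyperfiniteness, reusing throughout the modular-theoretic intertwining machinery behind the exact Theorem~\ref{t:main} but replacing its exact identities by their strong/weak-operator limits. Abbreviate $u_t:=[D\om_{\Omega}:D\om_\psi]_t\in\B$ and $w_{n,t}:=[D\om_{V_n\Omega}:D\om_{V_n\psi}]_t\in\A_n$ (with primed analogues in $\B'$, $\A_n'$), and write $\Phi_n:=V_n^*(\cdot)V_n$ for the compression channels. Conceptually, (1) is asymptotic sufficiency of these channels, (2) is asymptotic intertwining of relative/ordinary modular flow, and (3) is the asymptotic algebraic error-correction (Knill--Laflamme) condition that recovered operators commute with the complementary algebra.

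For $(1)\Rightarrow(2)$ I would read approximate recovery as asymptotic sufficiency of the compressions: $\Phi_n\circ R_n\to\id_\B$ in the point-WOT topology, so that the encoded states $\om_{V_n\psi}=\om_\psi\circ\Phi_n$ are asymptotically recoverable to $\om_\psi$. By an approximate Petz/recoverability estimate of the type used in \cite{Gesteau:2021jzp,FL}, asymptotic sufficiency forces the associated Connes cocycles to be asymptotically intertwined by $V_n$, which is exactly (2); equivalently one shows $V_n\Delta_{\psi,\Omega}^{it}-\Delta_{V_n\psi,V_n\Omega}^{it}V_n\to 0$ in SOT, uniformly on compact $t$-sets, and multiplies by the intertwined vacuum modular flow, with the primed statement following from $R_n'$. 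For $(2)\Rightarrow(3)$ I would read (2) as saying that each cocycle $u_t'$ asymptotically intertwines into $\A_n'$, i.e. $V_nu_t'-w_{n,t}'V_n\to0$ with $w_{n,t}'\in\A_n'$. As $\psi$ ranges over all cyclic--separating states and $t$ over $\R$ these cocycles generate $\B'$ (a standard generation fact), and since the $w_{n,t}'$ are unitary the set of $b'$ admitting uniformly bounded asymptotic intertwiners is a $*$-subalgebra, hence SOT-dense in $\B'$. For such $b'$, writing $V_nb'\approx c_n'V_n$ with $c_n'\in\A_n'$ and using $a_nc_n'=c_n'a_n$ shows that $\Phi_n(a_n)$ asymptotically commutes with $b'$; a uniform-norm $\ep/3$ argument, legitimate because (3) is a WOT statement and $\|\Phi_n(a_n)\|\le\|a_n\|$, then propagates this to all of $\B'$, and symmetrically to the primed commutator.

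The hard part will be $(3)\Rightarrow(1)$, which is why the converse is restricted to hyperfinite $\B$: condition (3) is only an approximate algebraic Knill--Laflamme condition and in general does not manufacture recovery channels. Here I would exploit that a hyperfinite $\B$ is semidiscrete (injective, by Connes), so there is a sequence of normal unital completely positive factorizations $\B\xrightarrow{\rho_k}M_{d_k}\xrightarrow{\eta_k}\B$ with $\eta_k\circ\rho_k\to\id_\B$ ultraweakly; this bypasses the nonexistence (Takesaki) of modular-invariant conditional expectations onto finite-dimensional subalgebras. On each $M_{d_k}$ the finite-dimensional complementary-recovery criterion applies, so the approximate commutation furnished by (3) produces genuine recovery channels $S_{n,k}:M_{d_k}\to\A_n$ whose compressions approximate $\rho_k$; a diagonal choice $k(n)\to\infty$ balancing the factorization error against the recovery error then yields $R_n:=S_{n,k(n)}\circ\rho_{k(n)}:\B\to\A_n$ with $\Phi_n(R_n(b))\to b$ weakly, and symmetrically $R_n'$. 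The principal obstacle is precisely this diagonalization: one must quantify the finite-dimensional recovery error in terms of the WOT-defect of (3) and drive it to zero simultaneously with the semidiscrete approximation error while keeping each $R_n$ a normal channel. I expect the modular-theoretic inputs to be routine continuous analogues of the exact theorem, and this finite-dimensional-to-hyperfinite reconstruction to be where essentially all of the difficulty lies.
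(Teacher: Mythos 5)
Your step $(1)\Rightarrow(2)$ contains the essential gap. You propose to pass from asymptotic sufficiency to asymptotic cocycle intertwining via ``an approximate Petz/recoverability estimate of the type used in \cite{Gesteau:2021jzp,FL}'', but the paper explicitly remarks that Petz's approximate-sufficiency machinery does \emph{not} apply in this setting: the ranges $V_n^*\A_nV_n$ are not known to lie in $\B$, and this lack of range control breaks the modular arguments in \cite[Lemma 3.1]{Petz}. The paper's actual mechanism is entirely different and is the main new idea of the proof: pass to an arbitrary subnet $(V_{n_i})$, fix a free ultrafilter $\mc{U}$, form the Groh--Raynaud ultraproduct $\A=(\prod_{\mc{U}}\A_{n_i})''$ acting on $\K=\prod_{\mc{U}}\K_{n_i}$ with the ultraproduct isometry $V=(V_{n_i})_{\mc{U}}$, use condition (1) to show that $\B'$ and $\B$ are private for $V^*(\cdot)V$ restricted to $\A$ and $\A'$ respectively, cut down to the corner $q\A q$ (with $q=pJpJ$ and $p$ the support projection of $\om_{V\Omega}$) so that $V\Omega$ becomes cyclic and separating, apply the \emph{exact} Theorem \ref{t:main} there, and then use the identification of the corner cocycle with $q([D\om_{V_{n_i}\Om}:D\om_{V_{n_i}\psi}]_t)_{\mc{U}}q$ to descend to SOT-convergence along $\mc{U}$, hence clustering of the subnet, hence convergence of the original sequence. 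Without this (or an honest substitute for the missing Petz-type estimate), your first implication is unproven. Your $(2)\Rightarrow(3)$, by contrast, does track the paper: reversal of the roles of $\Omega$ and $\psi$ by unitarity, asymptotic commutation with the cocycles, the generation of $\B$ by cocycle derivatives (the paper's Lemma \ref{l:cocycle}), and a Kaplansky-density/uniform-boundedness argument.

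For $(3)\Rightarrow(1)$ in the hyperfinite case the paper simply cites \cite[Theorem 1]{FL} and proves nothing itself, so your semidiscreteness-plus-diagonalization programme is both unnecessary relative to the paper and, as written, far from a proof: finite-dimensional complementary-recovery criteria require exact commutation rather than a WOT-vanishing commutator, the factorizations $\B\to M_{d_k}\to\B$ do not interact with the channels $V_n^*(\cdot)V_n$ in any controlled way, and the quantitative error bounds needed to run the diagonal argument are precisely what condition (3) does not supply. If you want a complete argument you should either cite Faulkner--Li as the paper does or supply those estimates explicitly.
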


The organization of the paper is as follows. In section \ref{s:preliminary}, we review preliminaries on cocycle derivatives, modular flow, relative entropy, as well as operator algebra quantum error correction. We establish the equivalence of exact complementary recovery with cocycle intertwining in Section \ref{s:exact} and discuss the emergent type $\mathrm{III}_1$ von Neumann algebras in Section \ref{s:III1}. We formulate and prove approximate complementary recovery in Section \ref{s:approx}. Utilizing the framework of \cite{DW}, in Section \ref{s:4} we prove subregion-subregion duality between boundary causal diamonds and bulk causal wedges for Klein--Gordon fields in the universal cover of AdS, providing a physically relevant, mathematically rigorous instance of (infinite-dimensional) operator-algebraic complementary recovery. We conclude with an outlook on future directions in Section \ref{s:outlook}.

\section{Preliminaries}\label{s:preliminary}
In this section, we review the necessary preliminaries from von Neumann algebras and operator-algebraic quantum error correction that will be applied in the context of holography.

\subsection{Relative modular theory} \label{s:modular}

We begin with an overview of Connes' cocycle derivatives and relative entropy in von Neumann algebras, following the presentation and notation of \cite[\S4,\S5]{OP}. All Hilbert spaces in this paper are assumed separable. The space of normal linear functionals on a von Neumann algebra $\A$, i.e., the predual of $\A$, is denoted $\A_*$.

Let $\A$ be a von Neumann algebra on a Hilbert space $\cH$, and let $\omega'$ be a positive normal functional in $(\A')_*$. A vector $\xi\in\cH$ is \textit{$\omega'$-bounded} if the map 
\begin{align}
    R^{\omega'}(\xi):\cH_{\omega'}\ni\Lambda_{\omega'}(a')\mapsto a'\xi\in\cH, \quad a'\in\A',
\end{align}
is bounded, where $\Lambda_{\omega'}:\A'\to \cH_{\omega'}$ is the Gelfand--Naimark--Segal (GNS) map of $\omega'$. We let $\mc{D}(\mathcal{H},\omega')$ denote the linear space of $\omega'$-bounded vectors. The closure of $\mc{D}(\cH,\omega')$ is $s(\omega')\cH$, where $s(\omega')$, the support projection of $\omega'\in (\A')_*$, is the smallest projection $p\in\A'$ such that $\omega'(p)=\omega'(1)$. Given $\xi,\eta\in\mc{D}(\cH,\omega')$, it follows that $R^{\omega'}(\xi)R^{\omega'}(\eta)^*\in\A$. 

For a positive element $\vphi\in\A_*$, the function
\begin{align}
    q(\xi+\eta)=\vphi(R^{\omega'}(\xi)R^{\omega'}(\eta)^*)
\end{align}
for $\xi\in\mc{D}(\mathcal{H},\omega')$ and $\eta\in\mc{D}(\mathcal{H},\omega')^{\perp}$ is a densely defined lower semi-continuous quadratic form on $\cH$. It is therefore closable and by the form representation theorem \cite[Theorem VIII.15]{RS72} (see also \cite[Theorem 5]{Connes}) there is a positive self-adjoint operator $\Delta(\vphi/\omega')$ such that 
\begin{enumerate}[\hspace{1em}(1)]
\item $\norm{\Delta(\vphi/\omega')^{1/2}\xi}^2=q(\xi)\,,\quad\xi\in\mc{D}(\cH,\omega')$,
\item $\mc{D}(\cH,\omega')$ is a core for $\Delta(\vphi/\omega')^{1/2}$. 
\end{enumerate}
The operator $\Delta(\vphi/\omega')$ is called the \textit{spatial derivative} of $\vphi$ with respect to $\omega'$. It satisfies the following properties:
\begin{enumerate}[\hspace{1em}(1)]
\item
$\mathrm{supp}\,\Delta(\vphi/\omega')=s(\vphi)s(\om')$.
\item $\Delta((\vphi_1+\vphi_2)/\om')=\Delta(\vphi_1/\om')+\Delta(\vphi_2/\om')$, where the sum on the right hand side is the form sum of positive operators. If $s(\vphi_1)\perp s(\vphi_2)$, we have an orthogonal sum.
\item $\Delta(\vphi/\om')^z=\Delta(\om'/\vphi)^{-z}$, $z\in\C$, where we use the convention that for a positive self-adjoint operator $A$, $A^z$ is the sum of 0 on $\mathrm{supp}(A)^{\perp}$ and the usual power $A^z$ on $\mathrm{supp}(A)$.
\end{enumerate}

Spatial derivatives were introduced by Connes \cite{Connes} as a generalization of Araki's relative modular operator \cite{Araki,ArakiII}. Indeed, if $(\A,\cH,J,\mc{P})$ is a standard form \cite{Haagerup75}, and $\vphi$ and $\omega$ are normal states on $\A$, then there exist unique vectors $\xi_\vphi,\xi_\om\in\mc{P}$ such that
\begin{align}
    \vphi=\omega_{\xi_{\vphi}}|_{\A}\,,\quad \omega=\omega_{\xi_{\om}}|_{\A}\,.
\end{align}
On the domain $\A\xi_{\om}+\A\xi_{\omega}^{\perp}$, the relative Tomita operator
\begin{align}
    S^0_{\vphi,\omega}(a\xi_\om+\eta) = s(\om)a^*\xi_{\vphi}\,, \quad a\in\A\,,\quad \eta\in\A\xi_{\omega}^{\perp}\,,
\end{align}
is closable. Its closure $S_{\vphi,\omega}$ admits a polar decomposition 
\begin{align}
    S_{\vphi,\omega} = J_{\vphi,\omega}\Delta(\vphi,\omega)^{1/2}\,.
\end{align}
The \textit{relative modular operator} $\Delta(\vphi,\omega)$ coincides with the spatial derivative $\Delta(\vphi/\omega'_{\xi_\om})$ of $\vphi$ with respect to the induced vector state $\omega'_{\xi_\om}$ on $\A'$.

Returning to the general case of a von Neumann algebra $\A$ on $\cH$, if $\vphi$ and $\omega$ are normal states on $\A$, $\om_0$ is a faithful normal state on $\A'$, and $z\in\C$, then the operator $\Delta(\vphi/\om_0')^z\Delta(\om/\om_0')^{-z}$ is independent of $\om_0'$ \cite{Araki73}. Moreover, the bounded operator 
\begin{align}
    [D\vphi:D\om]_{t}:= \Delta(\vphi/\om_0')^{it}\Delta(\om/\om_0')^{-it}\in\A, \quad t\in\R .
\end{align}
This one-parameter family is known as the \emph{Connes (or Radon-Nikodym) cocycle} of $\vphi$ and $\om$. We often write 
\begin{align}
    u_t=[D\vphi:D\om]_{t} ,
\end{align}
when the context is clear. 
\begin{thm}[Takesaki {\cite[Theorem VIII.3.3]{T2}}] \label{thm:T2}
If both $\vphi$ and $\om$ are faithful, each $u_t$ is a unitary in $\A$, and the family $(u_t)_{t\in\R}$ satisfies the following properties
\begin{enumerate}
\item $(u_t)_{t\in\R}$ is $\sigma$-strongly continuous,
\item $u_{s+t}=u_s\sigma_s^\om(u_t)\,,\quad s,t\in\R$,
\item $\sigma^\vphi_t(a)=u_t\sigma^\om_t(a)u_t^*\,,\quad t\in\R$,
\item for each $a,b\in\A$, there is a bounded continuous function $F$ on the closed horizontal strip $\overline{\mathbb{S}}$ bounded by $\R$ and $\R+i$ which is holomorphic on the open strip $\mathbb{S}$ such that 
$$F(t)=\vphi(u_t\sigma_t^{\om}(y)x), \ \ \ F(t+i)=\om(xu_t\sigma^{\om}(y)), \ \ \ t\in\R,$$
\end{enumerate}
where $\sigma^\vphi_t$ and $\sigma^\om_t$ are the modular automorphism groups of the vector states $\vphi$ and $\om$ on $\A$. Moreover, the family $(u_t)_{t\in\R}$ is uniquely determined by the KMS condition (4).
\end{thm}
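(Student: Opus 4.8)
The plan is to fix, as in the construction above, a faithful normal state $\om_0'$ on $\A'$ and argue directly from the defining formula $u_t=\Delta(\vphi/\om_0')^{it}\Delta(\om/\om_0')^{-it}$, using two standard inputs about spatial derivatives \cite{Connes}: that $\Delta(\vphi/\om_0')^{it}$ is a strongly continuous unitary group whenever $\vphi$ and $\om_0'$ are faithful, and Connes' implementation identity $\Delta(\vphi/\om_0')^{it}\,a\,\Delta(\vphi/\om_0')^{-it}=\sigma^\vphi_t(a)$ for $a\in\A$ (and likewise for $\om$). Since $\vphi$ and $\om$ are faithful, property (1) of the spatial derivative gives $\mathrm{supp}\,\Delta(\vphi/\om_0')=s(\vphi)s(\om_0')=1$, so $\Delta(\vphi/\om_0')^{it}$ is unitary, and similarly for $\om$; as $u_t\in\A$ is a product of unitaries it is a unitary in $\A$. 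Strong continuity of $t\mapsto u_t$ follows from Stone's theorem applied to the two generators, and since the family is uniformly bounded, strong and $\sigma$-strong continuity coincide on it, yielding (1).

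Properties (2) and (3) are then short computations. Writing $\sigma^\om_s(x)=\Delta(\om/\om_0')^{is}x\Delta(\om/\om_0')^{-is}$ for $x\in\A$, the cocycle identity reads
$$u_s\sigma^\om_s(u_t)=\Delta(\vphi/\om_0')^{is}\Delta(\om/\om_0')^{-is}\Delta(\om/\om_0')^{is}u_t\Delta(\om/\om_0')^{-is}=\Delta(\vphi/\om_0')^{is}u_t\Delta(\om/\om_0')^{-is}=u_{s+t},$$
proving (2). Inserting the definition of $u_t$ into $u_t\sigma^\om_t(a)u_t^*$, the inner $\Delta(\om/\om_0')^{\pm it}$ factors telescope and leave $\Delta(\vphi/\om_0')^{it}a\Delta(\vphi/\om_0')^{-it}=\sigma^\vphi_t(a)$, proving (3).

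The crux is the KMS condition (4) together with the uniqueness clause, for which I would use Connes' balanced-weight (or $2\times 2$ matrix) trick. On $M_2(\A)=\A\otimes M_2(\C)$ form the faithful normal state $\theta\big((a_{ij})\big)=\vphi(a_{11})+\om(a_{22})$. The technical heart is to verify that its modular automorphism group acts by
$$\sigma^\theta_t\big((a_{ij})\big)=\begin{pmatrix}\sigma^\vphi_t(a_{11}) & u_t\,\sigma^\om_t(a_{12})\\ \sigma^\om_t(a_{21})\,u_t^* & \sigma^\om_t(a_{22})\end{pmatrix},$$
which one checks by block-diagonalizing the modular operator of $\theta$ into the corners $\Delta(\vphi/\om_0')$, $\Delta(\om/\om_0')$ and the relative modular operators, and matching the off-diagonal block with the defining formula for $u_t$ (the auxiliary state $\om_0'$ must cancel in the end). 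Granting this, the ordinary Tomita--Takesaki KMS condition for the single state $\theta$, applied to the matrix units $X=x\otimes e_{21}$ and $Y=y\otimes e_{12}$, gives $\theta(\sigma^\theta_t(Y)X)=\vphi(u_t\sigma^\om_t(y)x)$ and $\theta(X\sigma^\theta_t(Y))=\om(xu_t\sigma^\om_t(y))$, which are exactly the two boundary values of the required strip function $F$ that is bounded and continuous on $\overline{\mathbb{S}}$ and holomorphic on $\mathbb{S}$; this establishes (4).

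For uniqueness, I would again route through $\sigma^\theta$: properties (2) and (4), imposed for all $x,y\in\A$, realize $(u_t)$ as the off-diagonal corner of a $\sigma$-weakly continuous one-parameter automorphism group of $M_2(\A)$ satisfying the KMS condition for $\theta$, and by the uniqueness of the modular automorphism group this group must equal $\sigma^\theta$, forcing the corner--and hence the family--to be $(u_t)$. I expect the main obstacle to be precisely the identification of $\sigma^\theta_t$ with the claimed off-diagonal form: this requires setting up the standard form of $M_2(\A)$ on $\cH\oplus\cH$, computing the four blocks of the relative modular operator of $\theta$, and reconciling conventions (in particular the placement of $u_t$ versus $u_t^*$ and the eventual disappearance of $\om_0'$). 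The remaining ingredients--Stone's theorem, the implementation property of spatial derivatives, and uniqueness of the modular automorphism group--are standard.
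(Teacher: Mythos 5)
This theorem is quoted in the paper directly from Takesaki's book (Theorem VIII.3.3) without proof, so there is no in-paper argument to compare against; your proposal correctly reproduces the standard proof of the cited result, namely Connes' balanced-weight $2\times 2$ matrix argument, with properties (1)--(3) following by telescoping from the spatial-derivative formula and (4) plus uniqueness routed through the modular group of $\theta$ on $M_2(\A)$. The steps you flag as the technical heart---block-diagonalizing the modular operator of $\theta$ to identify $\sigma^\theta_t$ with the claimed off-diagonal form, and the cancellation of the auxiliary state $\om_0'$ via Araki's independence result already quoted in the paper's preliminaries---are indeed where the real work lies, and your outline of them is sound.
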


\begin{rem}\label{r:rem} Suppose, in addition to the hypothesis of Theorem \ref{thm:T2}, that $\om=\om_\psi$ is a vector state. Letting $\psi_t:=u_t\psi$ for $t\in\R$, the expectation values of observables $a'\in\A'$ satisfy 
$$\la\psi_t,a'\psi_t\ra=\la\psi,a'\psi\ra,$$ since each $u_t$ is in $\A$. From the property (3) and the fact that $\om_\psi=\om_\psi\circ\sigma^{\om}_t$ (on $\A$), for observables $a\in\A$, we have
\begin{equation*}
\la\psi_t,a\psi_t\ra=\la\psi,u_t^*au_t\psi\ra=\la\psi,u_t^*\sigma^{\vphi}_t(\sigma^{\vphi}_{-t}(a))u_t\psi\ra=\la\psi,\sigma^{\om}_t(\sigma^{\vphi}_{-t}(a))\psi\ra=\la\psi,\sigma^{\vphi}_{-t}(a)\psi\ra.
\end{equation*}
Thus, 
\begin{equation}\label{e:local}\om_\psi\circ\mathrm{Ad}([D\vphi:D\om]_t^*)|_{\mc{A}}=\om_\psi\circ\sigma^\vphi_{-t}, \ \ \ t\in\R.\end{equation}
In other words, the evolution of the $\psi$-expectation values of $\A$ under the $\vphi-\om_{\psi}$-cocycle flow coincide with those under the (inverse) modular flow of $\vphi$. This observation was made in \cite{Bousso}, where it reflects the geometric nature of the kink transform (introduced in that paper) at the level of operator algebras. From a quantum channel perspective, equation \eqref{e:local} means that $\mathrm{Ad}(u_t^*)$ defines an $\mc{A}$-local (i.e., $\mc{A}'$-bimodule map leaving $\mc{A}$ globally invariant) $\om_\psi$-extension of $(\sigma^\vphi_{-t})_{t\in\R}$ from $\mc{A}$ to all of $\BH$. 
\end{rem}

The relative entropy is defined through spatial derivatives as follows. Let $\A$ be a von Neumann algebra on $\cH$, let $\omega$ and $\vphi$ be normal states on $\A$, and suppose $\omega=\om_\psi|_{\A}$, $\psi\in\cH$. The
\textit{relative entropy} $S(\omega,\vphi)$ of $\omega$ and $\vphi$ is 
\begin{align}
    S(\omega,\vphi)= \begin{cases}
        +\infty & \text{if }\psi\notin s(\vphi)\cH\\
        -\la\log(\Delta(\vphi/\om'_\psi))\psi,\psi\ra & \text{otherwise}
    \end{cases}
\end{align}
It is known that $S(\omega,\vphi)$ is independent of the representing vector $\psi$ for $\omega$.  From the monotonic limit $\lim_{t\to0^+}t^{-1}(\lambda^t-1)=\log\lambda$, the monotone convergence theorem implies that
\begin{equation}\label{e:log}S(\omega,\vphi)=-\lim_{t\to0^+}t^{-1}(\norm{\Delta(\vphi/\om'_{\psi})^{t/2}\psi}^2-1).\end{equation}

\subsection{Operator-algebraic quantum error correction}

Given von Neumann algebras $\A\subseteq\BK$, and $\B\subseteq\BH$, a \textit{quantum channel} from $\A$ to $\B$ is a normal, unital completely positive map $\mc{E}:\A\to\B$.  A von Neumann subalgebra $\mc{C}\subseteq\B$ is \textit{private} for $\mc{E}$ if $\mc{E}(\A)\subseteq\mc{C}'$ \cite{CKLT}. The subalgebra $\mc{C}$ is \textit{correctable} for $\mc{E}$ if there exists a quantum channel $\mc{R}:\mc{C}\to\A$ such that $\mc{E}\circ\mc{R}=\id_{\mc{C}}$ \cite{BKK1,BKK2}. When $\mc{C}=\B$ and 
\begin{equation}\label{e:sufficient}
    \om_i\circ\mc{E}\circ\mc{R}=\om_i
\end{equation}
for all $\om_i$ in a family $(\om_i)_{i\in I}$ of normal states in $\B_*$, $\mc{E}$ is said to be \textit{sufficient} for the family $(\om_i)_{i\in I}$ \cite{OP,Petz}. If, in addition to satisfying equation \eqref{e:sufficient}, $\B$ is a subalgebra of $\A$ and $\mc{R}$ is the inclusion map, the subalgebra $\B$ is said to be \textit{sufficient} for the family $(\om_i)_{i\in I}$ (via the quantum channel $\mc{E}$).

Given a quantum channel $\mc{E}:\A\to\B$, by Stinespring's representation theorem \cite{Stinespring}, there exists a Hilbert space $\mc{L}$, a normal unital $*$-homomorphism $\pi:\A\rightarrow\mc{B}(\mc{L})$ and an isometry $V:\mc{H}\to\mc{L}$ such that 
\begin{align}
    \mc{E}(a) = V^*\pi(a)V, \ \ \ a\in\A .
\end{align}
The triple $(\pi,V,\mc{L})$ forms a \emph{Stinespring representation} of $\mc{E}$, which is unique up to
a conjugation by a partial isometry
in the following sense: if $(\pi_1,V_1,\mc{L}_1)$ and $(\pi_2,V_2,\mc{L}_2)$ are
Stinespring representations for $\mc{E}$, then there is a partial isometry
$U:\mc{L}_1\rightarrow \mc{L}_2$ such that
\begin{equation}
    UV_1=V_2, \quad U^*V_2=V_1, \quad U\pi_1(a)=\pi_2(a)U
\end{equation}
for all $a\in\A$. If $(\pi_1,V_1,\mc{L}_1)$ yields a \emph{minimal}
Stinespring representation, meaning that
$\pi_1(\A)V_1\K$ is a dense subspace of $\mc{L}_1$, the map $U$ above is necessarily an isometry, and any two minimal Stinespring representations for $\mc{E}$ are
unitarily equivalent. When $\A$ and $\B$ are properly infinite, minimal Stinespring representations exist with $\mc{L}=\mc{H}$, $\pi(\A)\subseteq\B$, and $V\in\B$ \cite[Theorem 2.10]{Longo18}.

A \textit{complementary channel} of a quantum channel $\mc{E}:\A\to\B$ is defined as 
$$\mc{E}^c:\pi(\A)'\ni x\mapsto V^*xV\in\BH,$$
where $(\pi,V,\mc{L})$ is a Stinespring representation for $\mc{E}$. By above, complements are unique up to conjugation by a partial isometry. The (exact) complementarity theorem below, which relies on Arveson's commutant lifiting \cite[Theorem 1.3.1]{Arv69}, can be seen as an operator algebraic generalization of the Knill-Laflamme error correcting conditions \cite{KL97} (see \cite{BKK1,BKK2,CKLT} for an extended discussion).

\begin{thm}\label{t:comp} (\cite[Theorem 4.7]{CKLT}) Let $\A\subseteq\BK$ be a von Neumann algebra and $\mc{E}:\A\to\BH$ be a quantum channel. A von Neumann subalgebra $\B\subseteq\BH$ is correctable (respectively, private) for $\mc{E}$ if and only if it is private (respectively, correctable) for any complement $\mc{E}^c$. In particular, if $\B$ is private for $\mc{E}$ and $\mc{E}^c$ is a minimal complement of $\mc{E}$ defined relative to a minimal Stinespring representation $(\pi,V,\mc{L})$, then there exists a normal unital $*$-homomorphism $\mc{R}:\B\to\pi(\A)'$ such that 
$$\mc{R}(b)V=Vb, \ \ \ b\in\B.$$
\end{thm}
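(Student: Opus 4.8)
The plan is to fix a minimal Stinespring representation $(\pi,V,\mc{L})$ of $\mc{E}$ and to exploit the symmetry that the complement of $\mc{E}^c$ is again $\mc{E}$: since $\mc{E}^c=V^*(\cdot)V$ on $\pi(\A)'$ admits $(\iota,V,\mc{L})$ as a Stinespring representation (with $\iota$ the inclusion $\pi(\A)'\hookrightarrow\mc{B}(\mc{L})$), its complement is $V^*(\cdot)V$ on $\pi(\A)''=\pi(\A)$, whose image recovers $\mc{E}(\A)$. Because complements are unique up to conjugation by a partial isometry, and because both ``private'' and ``correctable'' depend only on $\mc{E}(\A)$ and are invariant under such conjugation, it suffices to prove the single biconditional [$\B$ private for $\mc{E}$] $\Leftrightarrow$ [$\B$ correctable for $\mc{E}^c$]; applying it with $\mc{E}^c$ in place of $\mc{E}$ then yields the companion biconditional [$\B$ correctable for $\mc{E}$] $\Leftrightarrow$ [$\B$ private for $\mc{E}^c$].

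For the forward implication---which also delivers the displayed ``in particular'' assertion---I would construct the recovery directly by commutant lifting. Assuming $\B$ is private, i.e. $\mc{E}(\A)\subseteq\B'$, define $\mc{R}(b)$ on the dense subspace $\pi(\A)V\cH\subseteq\mc{L}$ by
$$\mc{R}(b)\sum_i\pi(a_i)V\xi_i:=\sum_i\pi(a_i)Vb\xi_i.$$
The content of Arveson's commutant lifting is that this is well defined and bounded: expanding $\|\sum_i\pi(a_i)Vb\xi_i\|^2=\sum_{i,j}\la\xi_i,b^*\mc{E}(a_i^*a_j)b\xi_j\ra$ and using privateness to commute $b^*b$ past each $\mc{E}(a_i^*a_j)$, the positivity of the operator matrix $[\mc{E}(a_i^*a_j)]$ (complete positivity of $\mc{E}$) together with $b^*b\le\|b\|^2 1$ gives $\|\mc{R}(b)\|\le\|b\|$. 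By construction $\mc{R}(b)$ commutes with $\pi(\A)$, so $\mc{R}:\B\to\pi(\A)'$; a parallel inner-product computation shows $\mc{R}(b)^*=\mc{R}(b^*)$, while multiplicativity and unitality are immediate from the defining formula, and normality follows from that of $\mc{E}$ through the matrix coefficients. Taking each $a_i=1$ yields the intertwiner $\mc{R}(b)V=Vb$, whence $\mc{E}^c(\mc{R}(b))=V^*\mc{R}(b)V=V^*Vb=b$, so $\mc{E}^c\circ\mc{R}=\id_\B$ and $\B$ is correctable for $\mc{E}^c$.

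For the converse, suppose $\B$ is correctable for $\mc{E}^c$ via a channel $\mc{S}:\B\to\pi(\A)'$ with $\mc{E}^c\circ\mc{S}=\id_\B$. I would pass to the ambient unital completely positive map $\widetilde{\mc{E}}=V^*(\cdot)V:\mc{B}(\mc{L})\to\BH$, which restricts to $\mc{E}^c$ on $\pi(\A)'$ and satisfies $\widetilde{\mc{E}}(\pi(a))=\mc{E}(a)$. A standard Kadison--Schwarz/multiplicative-domain argument (using $\mc{S}(b^*b)\ge\mc{S}(b)^*\mc{S}(b)$ and $\mc{E}^c\circ\mc{S}=\id_\B$) forces the equality case of the Schwarz inequality, placing each $\mc{S}(b)$ in the multiplicative domain of $\widetilde{\mc{E}}$. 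Since the multiplicative domain is a bimodule over which $\widetilde{\mc{E}}$ is a bimodule map, and since $\mc{S}(b)\in\pi(\A)'$ commutes with $\pi(a)$,
$$b\,\mc{E}(a)=\widetilde{\mc{E}}(\mc{S}(b))\widetilde{\mc{E}}(\pi(a))=\widetilde{\mc{E}}(\mc{S}(b)\pi(a))=\widetilde{\mc{E}}(\pi(a)\mc{S}(b))=\mc{E}(a)\,b,$$
giving $\mc{E}(\A)\subseteq\B'$, i.e. privateness.

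The main obstacle is the boundedness step in the forward direction: establishing $\|\mc{R}(b)\|\le\|b\|$ is precisely where Arveson's commutant lifting theorem is invoked, and it hinges on combining complete positivity of $\mc{E}$ with the commutation relations supplied by privateness; verifying normality of $\mc{R}$ in the $\sigma$-weak topology is the accompanying technical point. I would also take care that ``any complement'' is legitimate, by checking that conjugating a complement by the partial isometry from the Stinespring uniqueness clause preserves both correctability and privateness.
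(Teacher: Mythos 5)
The paper does not prove this statement; it is quoted directly from \cite[Theorem 4.7]{CKLT}, whose proof proceeds exactly as you outline (Arveson's commutant lifting to build the $*$-homomorphic recovery $\mc{R}$ from privacy, and a Kadison--Schwarz/multiplicative-domain argument for the converse, with the reduction to a minimal representation handled by the partial-isometry uniqueness of Stinespring dilations). Your argument is correct and matches the cited proof's strategy, including the boundedness estimate $\|\mc{R}(b)\|\le\|b\|$ via positivity of $[\mc{E}(a_i^*a_j)]$ and commutation with $b^*b\in\B$.
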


\section{Operator recovery through Connes cocycle flow}

In this section, we establish our exact (\S3.1) and approximate (\S3.3) complementary recovery results, and discuss emergent type $\mathrm{III}_1$ structures (\S3.2).

\subsection{Exact recovery}
\label{s:exact}

In this section we establish a general complementary recovery result for von Neumann algebras, which, in the context of exact holographic error correction, connects holographic relative entropy \cite{KK}, holographic conditional expectations \cite{Faulkner:2020hzi} and the preservation of Connes' cocycle flow. In essence, it is an application of the theory of sufficient subalgebras (see, e.g., \cite{OP,Petz}), together with standard techniques in modular theory. In Section \ref{s:4}, we show that Theorem \ref{t:main} below applies to the case of Klein--Gordon fields in (the universal cover of) anti-de Sitter space where $\A$ and $\B$ (in Theorem \ref{t:main}) represent the observable algebras of boundary causal diamonds and their associated bulk causal wedges, respectively.

\begin{t:main} 
Let $V:\cH\hookrightarrow\K$ be an isometry between Hilbert spaces $\cH$ and $\K$. Let $\A$ and $\B$ be von Neumann algebras on $\K$ and $\cH$, respectively. Suppose there is a cyclic and separating state vector $\Omega\in\cH$ for $\B$ such that $V\Omega\in\K$ is cyclic and separating for $\A$. Then the following conditions are equivalent:
\begin{enumerate}
\item (Complementary Recovery) $\B$ and $\B'$ are correctable subalgebras for the complementary channels $V^*(\cdot)V:\A\to\BH$ and $V^*(\cdot)V:\A'\to\BH$, respectively.
\item (Preservation of CC Flow) For all states $\psi\in\cH$, we have $$V[D\om_{\Om}:D\om_\psi]_t=[D\om_{V\Om}:D\om_{V\psi}]_tV, \ \  V[D\om_{\Om}':D\om_{\psi}']_t=[D\om_{V\Om}':D\om_{V\psi}']_tV, \ \ \ t\in\R.$$
\item (Preservation of Relative Entropy) For all states $\psi\in\cH$, 
$$S_{\A}(\om_{V\psi},\om_{V\Om})=S_{\B}(\om_{\psi},\om_{\Om}) \ \ \ S_{\A'}(\om_{V\psi}',\om_{V\Om}')=S_{\B'}(\om_{\psi}',\om_{\vphi}').$$
\end{enumerate}
\end{t:main}

\begin{proof} $\bm{(1)\Rightarrow(2)}$: Let $\mc{E}$ denote the channel
$$\mc{E}:\BK\ni T\mapsto V^*TV\in\BH .$$
Since $V\Om$ is cyclic for both $\A$ and $\A'$, it follows that $(\id,V,\mc{K})$ is a minimal Stinespring representation for both $\mc{E}|_{\A}$ and $\mc{E}|_{\A'}$. 

Since $\B$ and $\B'$ are correctable for $\mc{E}|_{\A}$ and $\mc{E}|_{\A'}$, respectively, $\B$ and $\B'$ are private for the minimal complements $\mc{E}|_{\A'}=(\mc{E}|_{\A})^c$ and   $\mc{E}|_{\A}=(\mc{E}|_{\A'})^c$, respectively. By Theorem \ref{t:comp} there exist normal unital $*$-homomorphisms $\mc{R}:\B\to\A$ and $\mc{R}':\B'\to\A'$ such that 
\begin{equation}\label{e:Arv}\mc{R}(b)V=Vb, \ \  \mc{R}'(b')V=Vb', \ \ \  b\in\B, \ b'\in\B'.\end{equation}
Moreover, privacy (by definition) implies that $\mc{E}(\A)\subseteq\B$ and $\mc{E}(\A')\subseteq\B'$ (facts which can also be verified explicitly using (\ref{e:Arv}). Noting that the density of $\A'V\Omega$ in $\cH$ implies that $\mc{E}|_{\A}$ is faithful, it follows that 
$$E:=\mc{R}\circ\mc{E}|_{\A}:\A\to\A$$
is a normal faithful conditional expectation from $\A$ onto the (unital) subalgebra $\mc{R}(\B)$ (as noted in \cite{Faulkner:2020hzi}). Moreover, by (\ref{e:Arv}) for every $\psi\in\cH$, we have
$$\om_{V\psi}\circ E(a)=\la V\psi,\mc{R}(\mc{E}(a))V\psi\ra=\la\psi,\mc{E}(a)\psi\ra=\la V\psi,aV\psi\ra=\om_{V\psi}(a),$$
\emph{i.e.}, $E$ preserves all vector states of the form $\om_{V\psi}$, $\psi\in\cH$. It follows that $\mc{R}(\B)$ is a sufficient subalgebra of $\A$ for the collection $(\om_{V\psi})$, $\psi\in\cH$. By \cite[Theorem 9.3]{OP}, for all $\B$-separating vectors $\psi\in\cH$ we have $[D\om_{V\Omega}:D\om_{V\psi}]_t\in\mc{R}(\B)$ for all $t\in\R$ (note that $V\psi$ is separating for $\A$ by faithfulness of $\mc{E}$ and $\om_\psi|_{\B}$). 

Let $u_t\in\B$ be such that $\mc{R}(u_t)=[D\om_{V\Om}:D\om_{V\psi}]_t$. Since $\mc{R}$ is a unital injective $*$-homomorphism, each $u_t$ is necessarily unitary. We first show that $u_t=[D\om_{\Om},D\om_{\psi}]_t$ using uniqueness of the KMS condition (4) of Theorem \ref{thm:T2}.

As $u_t=V^*([D\om_{V\Om}:D\om_{V\psi}]_t)V$, $t\in\R$, we know that the family $(u_t)$ is $\sigma$-strongly continuous. From \cite[Theorem 1.1(4)]{Gesteau:2020rtg} (or \cite[Theorem 2]{Faulkner:2020hzi}), it follows that 
$$\mc{R}(\sigma^{\psi}_t(b))=\sigma^{V\psi}_t(\mc{R}(b)),\quad b\in\B ,\quad t\in\R.$$ 
We include the details of the convenience of the reader: by Takesaki's theorem \cite{Takesaki72} applied to the invariance $\om_{V\psi}\circ E=\om_{V\psi}$, it follows that the modular automorphism group $(\sigma^{V\psi}_t)$ leaves the subalgebra $\mc{R}(\B)$ invariant. Then, by \cite[Corollary VIII.1.4]{T2}, which follows from uniqueness of the KMS condition for modular flow, we have 
$$\sigma^\psi_t=\mc{R}^{-1}\circ\sigma^{V\psi}_t\circ\mc{R},\quad t\in\R.$$
By the KMS condition (\ref{thm:T2}-(4)) of $[D\om_{V\Om}:D\om_{V\psi}]_t$, given $b,c\in\B$, there is a bounded continuous function $F$ on the closed horizontal strip $\overline{\mathbb{S}}$  which is holomorphic on the open strip $\mathbb{S}$ such that 
\begin{align*}
    F\left( t\right) &=\om_{V\Omega} \left( [D\om_{V\Om}:D\om_{V\psi}]_t\sigma_t^{V\psi}(\mc{R}(c))\mc{R}(b)\right)
    =\om_{V\Omega} \left( \mc{R}(u_t\sigma_t^{\psi}(c)b)\right)
    =\om_{\Omega} \left( u_t\sigma_t^{\psi}(c)b\right) ,\\
    F\left( t+i\right) &=\om_{V\psi} \left( \mc{R}(b)[D\om_{V\Om}:D\om_{V\psi}]_t\sigma^{V\psi}(\mc{R}(c))\right)
    =\om_{V\psi} \left(\mc{R}(bu_t\sigma^{\psi}_t(c))\right)
    =\om_{\psi} \left( bu_t\sigma^{\psi}_t(c)\right) ,
\end{align*}
for every $t\in\R$.
By uniqueness of the KMS condition,  $u_t=[D\om_{\Om},D\om_{\psi}]_t$, $t\in\R$. 

We now deduce the general case of $\psi\in\cH$, $\norm{\psi}=1$ from the separating case. Recall that the support projection $s(\om_{\psi})$ of $\om_\psi\in \B_*$ satisfies $s(\om_{\psi})\cH=\overline{\B'\psi}$. Similarly, $s(\om_{V\psi})\K=\overline{\A'V\psi}$. 
Then, for any $\xi\in\mc{H}$,
$$Vs(\om_{\psi})\xi=\lim_{n\to\infty}Vb_n'\psi=\lim_{n\to\infty}\mc{R}'(b'_n)V\psi=\lim_{n\to\infty}s(\om_{V\psi})\mc{R}'(b'_n)V\psi=s(\om_{V\psi})Vs(\om_{\psi})\xi.$$
But for $\eta\in\B'\psi^{\perp}$ and $\zeta\in\K$, we have
$$\la s(\om_{V\psi})V\eta,\zeta\ra=\la V\eta,s(\om_{V\psi})\zeta\ra=\lim_{n\to\infty}\la V\eta,a_n'V\psi\ra=\lim_{n\to\infty}\la \eta,\underbrace{\mc{E}(a_n')}_{\in\B'}\psi\ra=0.$$
It follows that 
\begin{equation}\label{e:supp}
Vs(\om_{\psi})=s(\om_{V\psi})V.
\end{equation}

Next, we fix a faithful normal state $\om_0'$ on $\B'$. Pick a normal faithful state $\omega$ on $(1-s(\om_\psi))\B(1-s(\om_\psi))$, and consider 
$$\tilde{\omega}=2^{-1}(\om_\psi+\omega).$$
Then, $\tilde{\omega}$ is a normal faithful state on $\B$. By \cite[equations (4.5), (4.6)]{OP} (see property (2) of spatial derivatives in section \ref{s:modular}) 
$$\Delta(\om_\psi/\om_0')=\Delta((\om_\psi+\om)/\om_0')s(\om_\psi)=2\Delta(\tilde{\omega}/\om_0')s(\om_\psi).$$
Thus, $\Delta(\tilde{\omega}/\om_0')s(\om_\psi)$ is a self-adjoint operator, which implies that $[\Delta(\tilde{\omega}/\om_0'),s(\om_\psi)]=0$ on $\mc{D}(\Delta(\om_\psi/\om_0'))$. It follows that for any $t\in\R$, 
$$\Delta(\om_\psi/\om_0')^{it}=2^{it}\Delta(\tilde{\om}/\om_0')^{it}s(\om_\psi).$$ 
Thus we find that
\begin{align*}
[D\om_{\Om}:D\om_{\psi}]_t&=\Delta(\om_{\Om}/\om_0')^{it}\Delta(\om_\psi/\om_0')^{-it}\\
&=2^{-it}\Delta(\om_{\Om}/\om_0')^{it}\Delta(\tilde{\om}/\om_0')^{-it}s(\om_\psi)\\
&=2^{-it}[D\om_{\Om}:D\tilde{\omega}]_ts(\om_\psi).\end{align*}

Since $\B$ is necessarily in standard form on $\cH$ (as it contains a cyclic and separating vector), we can write the normal faithful state $\tilde{\om}=\om_{\xi_{\tilde{\om}}}|_{\B}$ for a separating vector $\xi_{\tilde{\om}}$. By the separating property and equation \eqref{e:supp}, we have
\begin{align*}V[D\om_{\Om}:D\om_{\psi}]_t&=2^{-it}V[D\om_{\Om}:D\om_{\xi_{\tilde{\om}}}]_ts(\om_\psi)\\
&=2^{-it}[D\om_{V\Om}:D\om_{V\xi_{\tilde{\om}}}]_tVs(\om_\psi)\\
&=2^{-it}[D\om_{V\Om}:D\om_{V\xi_{\tilde{\om}}}]_ts(\om_{V\psi})V.
\end{align*}
But, on $\A$, we find that
$$\om_{V\xi_{\tilde{\om}}}=\om_{\xi_{\tilde{\om}}}\circ\mc{E}=\tilde{\om}\circ\mc{E}=2^{-1}(\om_{V\psi}+\om\circ\mc{E}),$$
and by using the support projection $s(\om)=1-s(\om_{\psi})$ and equation \eqref{e:supp}, we have 
$$\om\circ\mc{E}(1-s(\om_{V\psi}))=\om(1-s(\om_{\psi}))=1.$$
Thus, 
$$s(\om\circ\mc{E})\leq 1-s(\om_{V\psi}).$$ 
For any normal faithful state $\vphi'\in\A'$, it follows from the properties (1) and (2) of spatial derivatives that 
$$\Delta(\om_{V\xi_{\tilde{\om}}}/\vphi')s(\om_{V\psi})=\frac{1}{2}\Delta(\om_{V\psi}/\vphi')s(\om_{V\psi})=\frac{1}{2}\Delta(\om_{V\psi}/\vphi').$$
Hence, we get
\begin{align*}
[D\om_{V\Om}:D\om_{V\xi_{\tilde{\om}}}]_ts(\om_{V\psi})&=\Delta(\om_{V\Om}/\vphi')^{-it}\Delta(\om_{V\xi_{\tilde{\om}}}/\vphi')^{-it}s(\om_{V\psi})\\
&=2^{it}\Delta(\om_{V\Om}/\vphi')^{it}\Delta(\om_{V\psi}/\vphi')^{-it}\\
&=2^{it}[D\om_{V\Om}:D\om_{V\psi}]_t.
\end{align*}
Putting this together, we finally see that
$$V[D\om_{\Om}:D\om_{\psi}]_t=2^{-it}[D\om_{V\Om}:D\om_{V\xi_{\tilde{\om}}}]_ts(\om_{V\psi})V=[D\om_{V\Om}:D\om_{V\psi}]_tV, \ \ \ t\in\R.$$

By symmetry, the same argument shows that 
$$V[D\om_{\Om}':D\om_{\psi}']_t=[D\om_{V\Om}':D\om_{V\psi}']_tV, \ \ \ t\in\R, \ \psi\in\cH,$$
where $[D\om_{\Om}':D\om_{\psi}']_t$ and $[D\om_{V\Om}':D\om_{V\psi}']_t$ are the cocycle flows on $\B'$ and $\A'$, respectively.

$\bm{(2)\Rightarrow(3)}$: Let $\psi\in\cH$ be a state. Pick a normal faithful functional $\om'$ on $(1-s(\om'_\psi))\B'(1-s(\om'_\psi))$, so that $\om_0'=\om'_\psi+\om'$ is faithful on $\B'$. Then, by properties of spatial derivatives
$$\Delta(\om_0'/\om_{\psi})\psi=\Delta(\om_0'/\om_{\psi})s(\om'_{\psi})\psi=\Delta(\om'_\psi/\om_\psi)\psi=\psi,$$
we find that
$$\Delta(\om_{\psi}/\om_0')^{-it}\psi=\Delta(\om_0'/\om_{\psi})^{it}\psi=\psi, \ \ \ t\in\R.$$ 
Combined with similar support arguments used above, we have
\begin{align*}[D\om_{\Om}:D\om_{\psi}]_t\psi&=\Delta(\om_{\Om}/\om_0')^{it}\Delta(\om_{\psi}/\om_0')^{-it}\psi\\
&=\Delta(\om_{\Om}/\om_0')^{it}\psi\\
&=\Delta(\om_0'/\om_{\Om})^{-it}\psi\\
&=\Delta(\om'_\psi/\om_{\Om})^{-it}\psi\\
&=\Delta(\om_{\Om}/\om'_\psi)^{it}\psi.
\end{align*}
Then the condition (2) implies that
\begin{equation}\label{e:V}
V\Delta(\om_{\Om}/\om'_\psi)^{it}\psi=V[D\om_{\Om}:D\om_{\psi}]_t\psi=[D\om_{V\Om}:D\om_{V\psi}]_tV\psi=\Delta(\om_{V\Om}/\om'_{V\psi})^{it}V\psi.
\end{equation}

Since $\psi\in\mc{D}(\Delta(\om_{\Om}/\om'_{\psi})^{1/2})$, there exists a unique bounded strongly continuous function \
$f_\psi:\overline{\mathbb{S}}_{-1/2}\to s(\om'_{\psi})\cH\subseteq\cH$, such that 
$$f_\psi(t)=\Delta(\om_{\Om}/\om'_{\psi})^{it}\psi, \ \ \ t\in\R,$$
and $f_\psi$ is strongly analytic on $\mathbb{S}_{-1/2}=\{z\in\C\mid -1/2<\mathrm{Im}(z)<0\}$ (see, e.g., \cite[Theorem A.7]{Hiai}). Similarly, there exists a unique bounded strongly continuous function $f_{V\psi}:\overline{\mathbb{S}}_{-1/2}\to s(\om'_{V\psi})\K\subseteq\K$
such that 
$$f_{V\psi}(t)=\Delta(\om_{V\Om}/\om'_{V\psi})^{it}V\psi$$ 
and $f_{V\psi}$ is strongly analytic on $\mathbb{S}_{-1/2}$. Then, $Vf_\psi:\overline{\mathbb{S}}_{-1/2}\to\K$ is bounded, strongly continuous, and analytic on $\mathbb{S}_{-1/2}$ with $Vf_\psi(t)=f_{V\psi}(t)$ for all $t\in\R$. Due to the uniqueness of analytic continuations, we find that for all $z\in \overline{\mathbb{S}}_{-1/2}$,
$$f_{V\psi}(z)=Vf_{\psi}(z).$$
In particular, for $t\in(0,1)$ ,
$$V\Delta(\om_{\Om}/\om_{\psi})^{t/2}\psi=Vf_{\psi}(-it/2)=f_{V\psi}(-it/2)=\Delta(\om_{V\Om}/\om_{V\psi})^{t/2}V\psi.$$
Then, equation \eqref{e:log} implies that
\begin{align*}S_{\A}(\om_{V\psi},\om_{V\Om})&=-\lim_{t\to0^+}t^{-1}(\norm{\Delta(\om_{V\Om}/\om'_{V\psi})^{t/2}V\psi}^2-1)\\
&=-\lim_{t\to0^+}t^{-1}(\norm{V\Delta(\om_{\Om}/\om'_{\psi})^{t/2}\psi}^2-1)\\
&=-\lim_{t\to0^+}t^{-1}(\norm{\Delta(\om_{\Om}/\om'_{\psi})^{t/2}\psi}^2-1)\\
&=S_{\B}(\om_{\psi},\om_{\Om}).
\end{align*}
The argument for $S_{\A'}(\om'_{V\psi},\om'_{V\Om})=S_{\B'}(\om'_{\psi},\om'_{\Om})$ is identical.

$\bm{(3)\Rightarrow(1)}$: This implication (and the equivalence of (1) and (3)) was shown in \cite[Theorem 1.1]{KK} under the a priori additional assumption that the set of cyclic and separating vectors for $\B$ is dense in $\cH$. This assumption, however, follows since we have a single cyclic and separating vector $\Om\in\cH$ for $\B$. Indeed, if $a\in\B'$ is invertible, it follows that $a\Om$ is also cyclic and separating for $\B$. Since the invertible elements of $\B'$ are strongly dense in $\B'$ \cite[Proposition 1]{DM} and $\B'\Om$ is dense in $\cH$, it follows that $\mathrm{Inv}(\B')\Om$ is dense in $\cH$. Combined with \cite[Theorem 1.1]{KK}, the implication holds. 
\end{proof}




\begin{rem}\label{r:r1}
Given $\psi\in\cH$ such that $S_{\B}(\om_{\psi},\om_{\Om})<\infty$, by monotonicity of the relative entropy \cite[Theorem 5.3]{OP}
$$S_{\A}(\om_{V\psi},\om_{V\Om})=S_{\A}(\om_{\psi}\circ\mc{E},\om_{\Om}\circ\mc{E})\leq S_{\B}(\om_{\psi},\om_{\Om})<\infty.$$
Assuming condition (2) in Theorem \ref{t:main}, the derivative formula for the relative entropy \cite[Theorem 5.7]{OP} then gives
\begin{align*}S_{\A}(\om_{V\psi},\om_{V\Om})&=i\lim_{s\to\infty}s^{-1}(\om_{V\psi}((D\om_{V\Om}:D\om_{V\psi})_t)-1)\\
&=i\lim_{s\to\infty}s^{-1}(\om_{\psi}((D\om_{\Omega}:D\om_{\psi})_t)-1)\\
&=S_{\B}(\om_{\psi},\om_{\Om}).
\end{align*}
However, when $S_{\B}(\om_{\psi},\om_{\Om})=\infty$, we cannot appeal directly to the derivative formula. The analytic continuation argument above circumnavigates this, and establishes the equality of relative entropy even when $S_{\B}(\om_{\psi},\om_{\Om})=\infty$.
\end{rem}


\subsection{Emergent type III$_1$ structures}
\label{s:III1}

Suppose, in addition to the hypotheses of Theorem \ref{t:main}, that $\A$ is a type III$_1$ factor and that $V\Omega$ is ergodic in the sense that the fixed point algebra is 
\begin{align}
    \mc{A}^{V\Omega}=\{a\in \A\mid \sigma^{V\Omega}_t(a)=a, \ t\in\R\}=\C1\,.
\end{align} 
By the proof of Theorem \ref{t:main} (see also, \cite[Theorem 2(c)]{Faulkner:2020hzi}), the recovery map intertwines the modular flows: 
\begin{align}
    \sigma^{V\Omega}_t(\mc{R}(b))=\mc{R}(\sigma^{\Omega}_t(b)),\quad b\in\mc{B}. 
\end{align}
Hence, if $\sigma^{\Omega}_t(b)=b$ for all $s\in\R$, then $\mc{R}(b)\in\mc{A}^{V\Omega}=\C1$. By injectivity of $\mc{R}$, we must have $b\in\C1$, implying that $\Omega$ is an ergodic state for $\B$. By the proof of \cite[Theorem 3]{Longo79}, it follows that either $\mc{B}=\C1$, in which case $\mathrm{dim}(\cH)=1$, or $\mc{B}$ is a type III$_1$ factor. This observation supports recent literature on the emergence of type III$_1$ structures in conformal field theory and holography \cite{Gabbiani:1992ar,Yngvason:2004uh,Kang:2019dfi,Leutheusser:2021qhd}.

\subsection{Approximate recovery}
\label{s:approx}

In this subsection we investigate an approximate version of Theorem \ref{t:main}. Results of a similar nature were recently established for hyperfinite bulk algebras by Faulkner and Li \cite[Theorem 2]{FL}. Indeed, we rely on their result \cite[Theorem 1]{FL} for the implication $(3)\Rightarrow(1)$ in the hyperfinite case of Theorem \ref{t:approx} below. Our contribution here is to show that, for general bulk observable algebras, approximate intertwining of cocycle derivatives is necessary for approximate bulk recovery, and sufficient for approximate privacy (the latter being equivalent to approximate recovery in the hyperfinite case by \cite[Theorem 1]{FL}). We also employ different techniques than Faulkner--Li \cite{FL}, e.g., formulating approximate error correction as exact error correction in an ultraproduct of von Neumann algebras. See \cite{AH,Raynaud} for details on ultraproducts.

We require the following (known) fact, whose proof we include for convenience of the reader.

\begin{lem}\label{l:cocycle} Let $\B\subseteq\BH$ be a von Neumann algebra and $\Omega\in\cH$ be a cyclic and separating state vector. Then $\B$ coincides with the von Neumann algebra generated by 
\begin{equation}\label{e:set}
\{[D\om_{\psi}: D\om_{\Omega}]_t\mid t\in\R, \textnormal{$\psi$ cyclic and separating vector}\}.\end{equation}
\end{lem}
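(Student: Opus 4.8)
The plan is to prove the two inclusions separately, writing $\mathcal{N}$ for the von Neumann algebra generated by the set \eqref{e:set}. The inclusion $\mathcal{N}\subseteq\B$ is immediate: whenever $\psi$ and $\Om$ are cyclic and separating, the induced vector states $\om_\psi$ and $\om_\Om$ are faithful and normal on $\B$, so by Theorem~\ref{thm:T2} each cocycle $[D\om_\psi:D\om_\Om]_t$ is a (unitary) element of $\B$, and $\mathcal{N}$ is generated by such elements.

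For the reverse inclusion $\B\subseteq\mathcal{N}$, the idea is to recover an arbitrary self-adjoint $h\in\B$ as a $t$-derivative of cocycles at $t=0$. Since $\B$ carries the cyclic and separating vector $\Om$, it is in standard form on $\cH$, and every faithful normal state on $\B$ has the form $\om_\xi|_{\B}$ for a (unique) cyclic and separating vector $\xi$ in the natural cone. In particular, given $h=h^*\in\B$, the Araki-perturbed state $\om_\Om^h$ is again faithful and normal, hence equals $\om_\xi|_{\B}$ for some cyclic and separating $\xi$; consequently its Connes cocycle $u_t:=[D\om_\Om^h:D\om_\Om]_t=[D\om_\xi:D\om_\Om]_t$ belongs to the generating set \eqref{e:set}, so that $u_t\in\mathcal{N}$ for every $t\in\R$.

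The key point is the first-order behaviour of $u_t$. By the expansional formula of Araki's perturbation theory (see, e.g., \cite{Araki,OP}), one has $u_0=1$ together with the norm-convergent expansion $u_t=1+i\int_0^t\sigma_s^{\Om}(h)\,ds+O(t^2)$, where $O(t^2)$ is bounded by $\sum_{n\geq 2}\|h\|^n t^n/n!$. Since the modular flow $s\mapsto\sigma_s^{\Om}(h)$ is strongly continuous, $t^{-1}(u_t-1)\to ih$ in the strong operator topology as $t\to 0$ (possibly up to an additive imaginary scalar $ic\cdot 1$ coming from the normalization of $\om_\Om^h$, which is harmless). Because $\mathcal{N}$ is a von Neumann algebra, it is closed in the strong operator topology and contains each $u_t$, so the limit $ih$ (or $i(h+c\cdot 1)$) lies in $\mathcal{N}$, whence $h\in\mathcal{N}$. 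As $h$ ranges over the self-adjoint part of $\B$ and $\B=\B_{\mathrm{sa}}+i\B_{\mathrm{sa}}$, we conclude $\B\subseteq\mathcal{N}$, giving $\mathcal{N}=\B$.

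The main obstacle is precisely this analytic input: one must know that a bounded self-adjoint perturbation $\om_\Om^h$ of a faithful normal state stays faithful and normal (so that it is represented by a cyclic and separating vector and its cocycle genuinely appears in \eqref{e:set}), and that the resulting cocycle is strongly differentiable at $t=0$ with derivative $ih$. Both are standard consequences of Araki's theory, and they are exactly what makes the \emph{bounded} generator $ih$ available inside $\mathcal{N}$. By contrast, the more naive route of differentiating the cocycles $[D\om_{u\Om}:D\om_\Om]_t=u\sigma_t^{\Om}(u^*)$ coming from unitaries $u\in\B$ only produces the unbounded commutator $[\log\Delta_\Om,\,\cdot\,]$, which is merely affiliated to $\mathcal{N}$ and does not close the argument; the perturbation $\om_\Om^h$ is the device that circumvents this.
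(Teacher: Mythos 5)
Your proof is correct, but it takes a genuinely different route from the paper's. The paper argues ``from above'': the cocycle identity $\sigma^{\Om}_s([D\om_\psi:D\om_\Om]_t)=[D\om_\psi:D\om_\Om]_s^*[D\om_\psi:D\om_\Om]_{s+t}$ shows the algebra $\B_\Om$ generated by \eqref{e:set} is invariant under the modular flow of $\om_\Om$, so Takesaki's theorem yields a normal faithful $\om_\Om$-preserving conditional expectation $E:\B\to\B_\Om$; Connes' theorem (every $\sigma$-cocycle in $\B_\Om$ is the cocycle derivative of a weight) together with uniqueness of cocycle derivatives then gives $\om_\psi\circ E=\om_\psi$ for every cyclic and separating $\psi$, and density of such vectors forces $E=\id_{\B}$. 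You argue ``from below,'' producing each self-adjoint $h\in\B$ directly as the strong derivative at $t=0$ of the cocycle of the Araki perturbation $\om_{\Om}^{h}$, whose normalized representative in the natural cone is cyclic and separating and hence contributes to \eqref{e:set}; since $\mathcal{N}$ is strongly closed and the difference quotients $t^{-1}(u_t-1)$ are norm-bounded near $0$, the limit $i(h-c1)$ lies in $\mathcal{N}$. Both arguments are sound. Yours is more explicit and proves the formally stronger statement that the subfamily of cocycles $[D\om_{\Om}^{h}:D\om_\Om]_t$ with $h=h^*\in\B$ already generates $\B$; the price is the analytic input from Araki's perturbation theory (faithfulness and normality of $\om_\Om^h$, and the norm-convergent expansional giving the derivative $ih$), which the paper trades for the softer machinery of conditional expectations and Connes' classification of cocycles. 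Your closing remark on why differentiating $u\sigma^{\Om}_t(u^*)$ only yields the unbounded generator is apt and correctly identifies the role of the perturbation.
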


\begin{proof} Let $\B_\Omega$ denote the von Neumann algebra generated by equation \eqref{e:set}. By the cocycle property, for each $s\in\R$, and cyclic and separating $\psi$, 
$$\sigma^\Omega_s([D\om_{\psi}: D\om_{\Omega}]_t)=[D\om_{\psi}:D\om_{\Omega}]_s^*[D\om_{\psi}: D\om_{\Omega}]_{s+t}\in\B_{\Omega}.$$ 
Thus, $\B_\Omega$ is stable under the modular automorphism group of $\om_{\Omega}$, implying the existence of a unique normal faithful $\om_{\Omega}$-preserving conditional expectation $E:\B\to\B_{\Omega}$ \cite{Takesaki72}. 
Let $\om_0=\omega_{\Om}|_{\B_\Omega}$.
Since $([D\om_{\psi}: D\om_{\Omega}]_t)_{t\in\R}\subseteq \B_{\Omega}$ is a cocycle with respect to $\sigma^{\omega_0}$, by Connes' theorem \cite[Th\'{e}or\`{e}me 1.2.4]{Connes73}, there exists a faithful normal weight $\vphi$ on $\B_\Omega$ such that 
$$[D\vphi:D\om_0]_t=[D\om_{\psi}: D\om_{\Omega}]_t, \ \ \ t\in\R.$$
(see \cite[\S VIII.3]{T2} for cocycle derivatives of weights). But then 
$$[D(\vphi\circ E):D\om_{\Omega}]_t=[D(\vphi\circ E):D(\om_{\Omega}\circ E)]_t=[D\vphi:D\om_0]_t=[D\om_{\psi}: D\om_{\Omega}]_t \ \ \ t\in\R,$$
where the second equality follows from by \cite[Corollary IX.4.22(ii)]{T2}. Since $\om_\Omega$ is faithful, by uniqueness of cocyle derivatives (which follows from uniqueness of spatial derivatives \cite[Proposition IX.3.10(i)]{T2}), we have $\vphi\circ E=\om_{\psi}$, implying $\om_{\psi}\circ E=\om_{\psi}$. Since the set of cyclic and separating vectors is dense in $\cH$ (see the proof of $(3)\Rightarrow(1)$ in Theorem \ref{t:main}), it follows that $E=\id_{\B}$, and $\B=\B_\Omega$.
\end{proof}

\begin{t:approx} 
Let $(V_n:\cH\hookrightarrow\K_n)_{n\in\N}$ be a sequence of isometries between Hilbert spaces $\cH$ and $\K_n$. Let $\A_n$ and $\B$ be von Neumann algebras on $\K_n$ and $\cH$, respectively. Suppose there is a cyclic and separating state vector $\Omega\in\cH$ for $\B$ such that $V_n\Omega\in\K_n$ is cyclic and separating for each $\A_n$. Consider the following conditions:
\begin{enumerate}
\item There exist sequences $(R_n:\B\to\A_n)_{n\in \N}, (R_n':\B'\to\A_n')_{n\in \N}$ of channels such that
$$wot-\lim_nV_n^*R_n(b)V_n=b, \ \ wot-\lim V_n^*R'_n(b')V_n=b'$$
for all $b\in\B$ and $b'\in\B'$.
\item For all cyclic and separating states $\psi\in\cH$ (for $\B$), we have 
\begin{align*}
sot-\lim_nV_n[D\om_{\Om}:D\om_\psi]_t&-[D\om_{V_n\Om}:D\om_{V_n\psi}]_tV_n=0\\
sot-\lim_nV_n[D\om_{\Om}':D\om_{\psi}']_t&-[D\om_{V_n\Om}':D\om_{V_n\psi}']_tV_n=0, \ \ \ t\in\R.
\end{align*}
\item For all bounded sequences $(a_n)\in\prod_{n\in\N}\A_n$, $(a'_n)\in\prod_{n\in\N}\A'_n$, and  and $b\in\B$, $b'\in\B$,
$$wot-\lim_n[V_n^*a_nV_n,b']=0, \ \ \ wot-\lim_n[V_n^*a'_nV_n,b]=0.$$
\end{enumerate}
Then $(1)\Rightarrow(2)\Rightarrow(3)$, and if $\B$ is hyperfinite, the three conditions are equivalent. 
\end{t:approx}

\begin{rem} The (approximate) theory of sufficient subalgebras \cite{Petz} (see also \cite[\S9]{OP}), which addresses similar questions of approximate recovery, does not directly apply here as the ranges $V_n^*\A_n V_n$ are not necessarily contained in $\B$. This lack of range control renders the maps $V_n^*(\cdot)V_n$ incompatible with some of the modular techniques in the proof of \cite[Lemma 3.1]{Petz}.
\end{rem}

\begin{proof}[Proof of Theorem \ref{t:approx}] $(1)\Rightarrow (2)$: First pass to an arbitrary subnet of $(V_{n_i})_{i\in I}$ of $(V_n)_{n\in\N}$. Note that by direct computation, for any unitaries $u\in\mc{U}(\B)$ and $u'\in\mc{U}(\B')$,
\begin{equation}\label{e:sot}sot-\lim_{i\in I} R_{n_i}(u)V_{n_i}-V_{n_i}u=0, \ \ sot-\lim_{i\in I} R'_{n_i}(u')V_{n_i}-V_{n_i}u'=0.\end{equation}

Let $\mc{U}$ be an ultrafilter on $I$ which dominates the order (i.e., tails) filter. Since $I\ni i\mapsto n_i \in\N$ is monotonic and co-final, and $\N$ has no greatest element, the order filter is necessarily free, implying that $\mc{U}$ is a free ultrafilter.


Since $V_{n_i}\Omega$ is cyclic and separating for $\A_{n_i}$, the action of $\A_{n_i}$ on $\K_{n_i}$ is standard. Let 
$$\A=\left(\prod_{\mc{U}}\A_{n_i}\right)''$$
denote the Groh--Raynaud ultraproduct of the standard von Neumann algebras $\A_i$ \cite{Groh,Raynaud}, which is defined as the strong operator closure of the Banach space ultraproduct $\prod_{\mc{U}}\A_{n_i}$ inside $\mc{B}(\K)$, where 
$$\K=\prod_{\mc{U}}\K_{n_i}$$ 
is the ultraproduct of the Hilbert spaces $\K_{n_i}$, and the action of $\prod_{\mc{U}}\A_{n_i}$ on $\K$ is 
$$(a_{n_i})_{\mc{U}}(\psi_{n_i})_{\mc{U}}=(a_{n_i}\psi_{n_i})_{\mc{U}}.$$
Note that the action of each $\A_{n_i}$ on $\K_{n_i}$ is standard as $V_{n_i}\Omega$ is cyclic and separating.
The map $V:\cH\to \K$ given by $V\psi=(V_{n_i}\psi)_{\mc{U}}$ is an isometry and 
$$\mc{E}:\mc{B}(\K)\to\mc{B}(\cH)\,,\quad\mc{E}=V^*(\cdot)V$$
is a quantum channel. If $(a_{n_i})_{\mc{U}}\in\prod_{\mc{U}}\A_{n_i}$, $u'\in\mc{U}(\B')$, and $\xi,\eta\in\cH$, we have
\begin{align*}
\la V^*((a_{n_i})_{\mc{U}})Vu'\xi,\eta\ra&=\lim_{i\to\mc{U}}\la a_{n_i}V_{n_i}u'\xi,V_{n_i}\eta\ra\\
&=\lim_{i\to\mc{U}}\la a_{n_i}R_{n_i}(u')V_{n_i}\xi,V_{n_i}\eta\ra\\
&=\lim_{i\to\mc{U}}\la R_{n_i}(u')a_{n_i}V_{n_i}\xi,V_{n_i}\eta\ra\\
&=\lim_{i\to\mc{U}}\la a_{n_i}V_{n_i}\xi,R_{n_i}((u')^*)V_{n_i}\eta\ra\\
&=\lim_{i\to\mc{U}}\la a_{n_i}V_{n_i}\xi,V_{n_i}(u')^*\eta\ra\\
&=\la V^*(a_{n_i})_{\mc{U}}V\xi,(u')^*\eta\ra\\
&=\la u'V^*(a_{n_i})_{\mc{U}}V\xi,\eta\ra.
\end{align*}
The first equality follows by definition of the ultraproduct Hilbert space, the second from equation \eqref{e:sot} and the boundedness of $(V_{n_i}\eta)$, the third from $R_{n_i}'(\B')\subseteq\A_{n_i}'$, and the fifth from equation \eqref{e:sot} and the boundedness of $(a_{n_i}V_{n_i}\xi)$. It follows by normality of $\mc{E}$ that 
$$\mc{E}(\A)\subseteq\B.$$ In other words, $\B'$ is private for $\mc{E}|_{\A}$.

As shown in \cite[Theorem 1.8]{Raynaud}, $\A'=(\prod_{i\in\mc{U}}\A_{n_i}')''$ (in the case when all $\A_{n_i}$ are equal; see \cite[Theorem 3.22]{AH} which handles the case when $I=\mathbb{N}$, and the $\A_{n_i}$ are not necessarily equal; the result is true in general). Hence, by symmetry of condition (1), we have 
$$\mc{E}(\A')\subseteq\B',$$ 
i.e., $\B$ is private for $\mc{E}|_{\A'}$. By the privacy/correctability duality \cite[Theorem 4.7]{CKLT}, we have recovery channels $\B\to\A$ and $\B'\to\A'$; however, we are not quite in position to apply Theorem~\ref{t:main} as $V\Omega$ is not cyclic and separating for $\A$. Thus, we need to work in a particular ``corner'' of $\A$. 

Let $p\in\A$ denote the support projection of the vector state $\om_{V\Omega}|_{\A}=\om_{(V_{n_i}\Omega)_{\mc{U}}}|_{\A}$.
Then $V^*(1-p)V\in\B$ (by above) and $\la\Omega,V^*(1-p)V\Omega\ra=0$. Since $\Omega$ is separating for $\B$, we have $V^*(1-p)V=0$, from which it follows that $pV=V$. 

It is known that $\A$ is standardly represented on $\K$ and that the associated conjugate linear isometry $J=(J_{n_i})_{\mc{U}}$, where $J_{n_i}=J_{V_{n_i}\Omega,\A_{n_i}}$ \cite[Corollary 3.7]{Raynaud} (see also \cite[Theorem 3.18]{AH}). Then, $JpJ$ is the support projection of $\om_{V\Omega}|_{\A'}$, and by symmetry we have $JpJV=V$. Letting $q=pJpJ$, by the general theory of standard forms, $q\A q$ is standardly represented on $q\K$, with $J_{q\A q}=qJq$ and $pap\mapsto qaq$ is a $*$-isomorphism $p\A p\cong q\A q$ \cite[Corollary 2.5, Lemma 2.6]{Haagerup75}. As $qV=V$, the vector $V\Omega\in q\K$ is separating for both $q\A q$ and $(q\A q)'=q\A' q$ (see \cite[Lemma 2.4]{Haagerup75} for the latter equality), and hence cyclic and separating for $q\A q$. The invariance $qV=V$ and the above calculations also show that 
$$\mc{E}(q\A q)\subseteq\B, \quad \mc{E}(q\A'q)\subseteq\B'.$$ Now combining Theorem~\ref{t:comp} with Theorem~\ref{t:main}, we have that for all $\psi\in\cH$,
\begin{align}
\begin{aligned}\label{e:exact}
    V[D\om_{\Om}:D\om_\psi]_t &=[D\om_{V\Om}:D\om_{V\psi}]_tV\,, \\  V[D\om_{\Om}':D\om_{\psi}']_t &=[D\om_{V\Om}':D\om_{V\psi}']_tV\,, \quad t\in\R.
\end{aligned}
\end{align}
If $\psi\in\cH$ is separating for $\B$ it follows that the support projection of $\om_{V\psi}|_{\A}$ is again $p$ (by the same argument for $\Omega$). Combining \cite[Proposition 3.15, Theorem 4.1]{AH} with the proof of \cite[Proposition 2.2(i)]{Raynaud}, it follows that
$$[D\om_{V\Om}:D\om_{V\psi}]^{p\A p}_t = p\left( [D\om_{V_{n_i}\Om}:D\om_{V_{n_i}\psi}]_t\right)_{\mc{U}}p.$$
Applying the $*$-isomorphism $p\A p \ni a\mapsto qaq\in q\A q$, we have
$$q[D\om_{V\Om}:D\om_{V\psi}]_tq = q[D\om_{V\Om}:D\om_{V\psi}]^{p\A p}_tq = q\left([D\om_{V_{n_i}\Om}:D\om_{V_{n_i}\psi}]_t\right)_{\mc{U}}q.$$
By equation \eqref{e:exact} and the invariance $qV=V$, for every $\xi,\eta\in\cH$, we therefore have
\begin{align*}
\la[D\om_{\Om}:D\om_\psi]_t\xi,\eta\ra&=\la V[D\om_{\Om}:D\om_\psi]_t\xi,V\eta\ra\\
&=\la [D\om_{V\Om}:D\om_{V\psi}]_tV\xi,V\eta\ra\\
&=\la q[D\om_{V\Om}:D\om_{V\psi}]_tqV\xi,V\eta\ra\\
&=\la ([D\om_{V_{n_i}\Om}:D\om_{V_{n_i}\psi}]_t)_{\mc{U}}V\xi,V\eta\ra\\
&=\la ([D\om_{V_{n_i}\Om}:D\om_{V_{n_i}\psi}]_tV_{n_i}\xi)_{\mc{U}},(V_{n_i}\eta)_{\mc{U}}\ra\\
&=\lim_{i\to\mc{U}}\la[D\om_{V_{n_i}\Om}:D\om_{V_{n_i}\psi}]_tV_{n_i}\xi,V_{n_i}\eta\ra\\
&=\lim_{i\to\mc{U}}\la V_{n_i}^*[D\om_{V_{n_i}\Om}:D\om_{V_{n_i}\psi}]_tV_{n_i}\xi,\eta\ra.
\end{align*}
Since $[D\om_{\Om}:D\om_\psi]_t$ is unitary and each $[D\om_{V_{n_i}\Om}:D\om_{V_{n_i}\psi}]_t$ is contractive, the standard argument shows that for all $\xi\in\cH$,
$$\lim_{i\to\mc{U}}\norm{V_{n_i}[D\om_{\Om}:D\om_\psi]_t\xi-[D\om_{V_{n_i}\Om}:D\om_{V_{n_i}\psi}]_tV_{n_i}\xi}=0,$$
that is, 
$$sot-\lim_{i\to\mc{U}}V_{n_i}[D\om_{\Om}:D\om_\psi]_t-[D\om_{V_{n_i}\Om}:D\om_{V_{n_i}\psi}]_tV_{n_i}=0.$$
It follows from the general correspondence between net and filter convergence (see, e.g., \cite[\S4]{Willard}) that for every separating $\psi\in\cH$, the net
$$\left( V_{n_i}[D\om_{\Om}:D\om_\psi]_t-[D\om_{V_{n_i}\Om}:D\om_{V_{n_i}\psi}]_tV_{n_i}\right)_{i\in I}$$
clusters to $0$ in the strong operator topology. Since the subnet $(n_i)_{i\in I}$ of the original sequence was arbitrary, it follows that the original sequence 
$$\left( V_{n}[D\om_{\Om}:D\om_\psi]_t-[D\om_{V_{n}\Om}:D\om_{V_{n}\psi}]_tV_{n}\right)_{n\in\N}$$
converges to zero in the strong operator topology. 

If $\psi\in\cH$ is in addition cyclic for $\B$, that is, separating for $\B'$, then the symmetry of equation \eqref{e:exact} implies the same result for the commutant cocycles. 

$(2)\Rightarrow(3)$ Boundedness of the sequence $(V_n)$ together with joint continuity of multiplication in the strong operator topology (on bounded sets) implies that 
$$[D\om_{\Om}:D\om_{\psi}]_t=sot-\lim_{n\to\infty}V_n^*[D\om_{V_n\Om}:D\om_{V_n\psi}]_t V_n,$$
for all $t\in\R$ and cyclic and separating separating $\psi\in\cH$. The convergence also holds in the weak operator topology. By continuity of the adjoint in the latter topology, we get
$$[D\om_{\psi}:D\om_{\Om}]_t=[D\om_{\Om}:D\om_{\psi}]_t^*=wot-\lim_{n\to\infty}V_n^*[D\om_{V_n\psi}:D\om_{V_n\Om}]_t V_n \ \ \ t\in\R.$$
Since $[D\om_{\psi}:D\om_{\Om}]_t$ is a unitary for every $t\in\R$, the standard argument shows that 
$$sot-\lim_{n\to\infty}V_n[D\om_{\psi}:D\om_{\Om}]_t-[D\om_{V_n\psi}:D\om_{V_n\Om}]_t V_n=0, \ \ \ t\in\R.$$
That is, the ($\B$ part of the) conclusion (2) also holds with the roles of $\Omega$ and $\psi$ reversed.

Now, fix a bounded sequence $(a_n')\in\prod_{n\in\N}\A_n'$, and $\xi,\eta\in\cH$. By boundedness and joint continuity of multiplication in the strong operator topology, for any cyclic and separating $\psi\in\cH$ and $t\in\R$, we therefore have
\begin{align*}\lim_{n\to\infty}\la V_n^*a'_n V_n[D\om_{\psi}:D\om_{\Om}]_t\xi,\eta\ra&=\lim_{n\to\infty}\la V_n^*a'_n[D\om_{V_n\psi}:D\om_{V_n\Om}]_t V_n\xi,\eta\ra\\
&=\lim_{n\to\infty}\la V_n^*[D\om_{V_n\psi}:D\om_{V_n\Om}]_ta_n'V_n\xi,\eta\ra\\
&=\lim_{n\to\infty}\la a_n'V_n\xi,[D\om_{V_n\Om}:D\om_{V_n\psi}]_tV_n\eta\ra\\
&=\lim_{n\to\infty}\la a_n'V_n\xi,V_n[D\om_{\Om}:D\om_{\psi}]_t\eta\ra\\
&=\lim_{n\to\infty}\la [D\om_{\psi}:D\om_{\Om}]_t V_n^*a'_n V_n\xi,\eta\ra.
\end{align*}
The same argument is valid with the roles of $\Omega$ and $\psi$ reversed, and also for finite linear combinations of finite products of $[D\om_{\psi}:D\om_{\Omega}]$ and $[D\om_{\Om}:D\om_{\psi}]$ for cyclic and separating $\psi\in\cH$. By boundedness, it follows that 
$$\lim_{n\to\infty}\la V_n^*a'_n V_nb_0\xi,\eta\ra=\lim_{n\to\infty}\la b_0 V_n^*a'_n V_n\xi,\eta\ra.$$
for all $b_0$ in the $C^*$-algebra $B_\Omega$ generated by 
$$\{[D\om_{\psi}: D\om_{\Omega}]_t\mid t\in\R, \textnormal{$\psi$ cyclic and separating vector}\}.$$

Let $b$ be a self-adjoint contraction in $\B$, which, by Lemma~\ref{l:cocycle} equals $B_\Omega''$. By Kaplansky's density theorem, we can approximate $b$ by a net $(b_i)$ of self-adjoint contractions from $B_\Omega$ in the strong operator topology. By boundedness of $(V_n^*a_n'V_n)$, it follows that
$$\lim_i\norm{V_n^*a_n'V_nb\xi-V_n^*a'_nV_nb_i\xi}=0$$
uniformly in $n\in\N$. Thus,
\begin{align*}
\lim_{n\to\infty}\la V_n^*a'_n V_nb\xi,\eta\ra&=\lim_i\lim_{n\to\infty}\la V_n^*a'_n V_nb_i\xi,\eta\ra\\
&=\lim_i\lim_{n\to\infty}\la b_iV_n^*a'_n V_n\xi,\eta\ra\\
&=\lim_i\lim_{n\to\infty}\la V_n^*a'_n V_n\xi,b_i\eta\ra\\
&=\lim_{n\to\infty}\la V_n^*a'_n V_n\xi,b\eta\ra\\
&=\lim_{n\to\infty}\la bV_n^*a'_n V_n\xi,\eta\ra .
\end{align*}
By linearity, it follows that 
$$\lim_{n\to\infty}\la V_n^*a'_n V_nb\xi,\eta\ra=\lim_{n\to\infty}\la bV_n^*a'_n V_n\xi,\eta\ra$$
for all $b\in\B$.

By symmetry, the same argument is valid for the commutant $\B'$, and we arrive at condition (3). This completes $(1)\Rightarrow(2)\Rightarrow(3)$.

Finally, when $\B$ is hyperfinite, $(3)\Rightarrow(1)$ holds by \cite[Theorem 1]{FL}. Therefore, all three conditions are are equivalent when $\B$ is hyperfinite.
\end{proof}

We note that approximate recovery studied in this section is more restrictive than that formulated in \cite{Gesteau:2021jzp}. Here, the Hilbert space corresponding to the bulk theory is fixed as $\mathcal{H}$, and there is a sequence of boundary Hilbert spaces $\mathcal{K}_n$. This way of building approximate recovery is based on taking a \emph{large $n$ limit}. In contrast, \cite{Gesteau:2021jzp} provides an explicit form of approximate recovery without needing a sequence of boundary Hilbert spaces. Topologically, the approximation of Theorem \ref{t:approx} (11) is with respect to the point weak* topology, whereas the approximation in \cite{Gesteau:2021jzp} is with respect to the completely bounded norm.

\section{Operator-algebraic subregion-subregion duality}\label{s:4}

In this section we give an operator algebraic proof of subregion-subregion duality between boundary causal diamonds and bulk causal wedges for (scalar) Klein--Gordon fields in the universal cover of AdS, thus giving a concrete instance of Theorem~\ref{t:main} in holography. The proof utilizes the recent framework of holography for Klein--Gordon fields in \cite{DW} together with the holographic error correction framework of \cite{Gesteau:2020rtg}. Combined with the known geometric modular structure for bulk wedges in vacuum AdS \cite{BEM} and general properties of cocycle derivatives, we obtain additional mathematical evidence in support of the kink transform conjecture of \cite{Bousso}.

The universal cover of $(d+1)$-dimensional anti-de Sitter space can be described as the manifold 
\begin{align}
    AdS_{d+1}=\R_t\times \overline{\mathbb{B}^d} ,
\end{align}
(where $\mathbb{B}^d$ is the open ball) with metric given by 
\begin{align}
    g=\frac{(1+z^2)dt^2-(1+z^2)^{-1}dz^2-d\om^2}{z^2}
\end{align}
on $[0, 1)_z\times\R_t\times \mathbb{S}_\om^{d-1}$ and with boundary $\partial AdS_{d+1} = \{z = 0\}$. Then $g$ is conformal to the metric $\tilde{g}:=z^2g$ on the interior $AdS_{d+1}^o$. As $z\to0$, we recover the cylindrical boundary metric:
\begin{align}
    \tilde{g}=(1+z^2)dt^2-\frac{1}{(1+z^2)}dz^2-d\om^2\quad \overset{z\to 0}{\loongrightarrow}\quad dt^2-d\om^2
\end{align}

The Klein--Gordon operator on $M:=(AdS_{d+1},g)$ is
\begin{align}
    P=\square_g+\nu^2-\frac{d^2}{4},
\end{align}
where $\square_g=|g|^{-1/2}\partial_\mu(|g|^{1/2}g^{\mu,\nu}\partial_\nu)$ is the Laplacian of $M$. Solutions to $Pu=f$ (on AdS proper and its universal cover) have been studied extensively from both physical and mathematical perspectives (see, e.g., \cite{AIS78,BZ82,BEM,DF16,DR03,F74,IW04,YG09}). Existence and uniqueness of solutions to $Pu=f$ on asymptotically AdS (aAdS) spacetimes with Dirichlet boundary conditions were studied recently in \cite{Vasy}, where it was shown that, with respect to certain Sobolev spaces, retarted/advanced propagators exist, leading to a natural symplectic space of solutions. (See, e.g., \cite{DF16,War13} for related results with other boundary conditions.)  Building on Vasy's work \cite{Vasy}, Wrochna \cite{Wrochna} introduced the notion of a holographic Hadamard state on aAdS spactimes, which was then utilized by Dybalski--Wrochna \cite{DW} towards a second quantization procedure analogous to the globally hyperbolic setting (see, e.g., \cite{G19}), as well as a mechanism for holography in aAdS spacetimes. For simplicity, in this paper we stick with $AdS_{d+1}$, but utilize the more general framework from \cite{DW} as in future work we hope to extend the analysis below to a class of aAdS spacetimes. See Section \ref{s:outlook} for a discussion.

\subsection{Symplectic solution space of Klein--Gordon equation}

We begin with a review of the constructions and notations from \cite{DW}, to which we refer the reader for details (see also \cite{Vasy,Wrochna}). Let $C^\infty(M)$ denote space of smooth functions on $M$ (that is, which posess smooth extensions across $\partial M$). The space of smooth functions which vanish along with all derivatives at $\partial M$ is denoted $\dot{C}^\infty(M)$. Its topological dual is denoted $C^{-\infty}(M)$. The related subspaces of compactly supported elements are respectively denoted $C^\infty_c(M)$, $\dot{C}^\infty_c(M)$ and $C^{-\infty}_c(M)$. On the boundaryless manifold $\partial M$, $\mc{D}'(\partial M)$ denotes the usual space of distributions (see, e.g., \cite[\S 6.3]{Horm}).

Let $\mathrm{Diff}_b(M)$ be the algebra of $b$-differential operators, those generated by smooth vector fields tangent to the boundary. 
Let $\mathrm{Diff}_0(M)$ be the space of smooth vector fields which vanish at the boundary. In what follows, we let $\la\cdot,\cdot\ra$ denote the canonical inner product on $L^2(M):=L^2(M,g)$ with respect to the associated volume form induced by $g$. For a nonnegative integer $k$, let
\begin{align}
    H_0^k(M):=\{u\in C^{-\infty}(M)\mid Qu\in L^2(M,g) \ \forall \ Q\in \mathrm{Diff}_0^k(M)\},
\end{align}
be the associated Sobolev space (topologized in the usual Hilbertian manner, see, e.g., \cite[\S2.4]{Wrochna} and the references therein). Above, $k$ denotes the order of a differential operator. $H_0^{-k}(M)$ is defined as the dual of $H_0^k(M)$ via the $L^2$-pairing, and we set $H_0^0(M)=L^2(M)$.

For $k\in\{-1,0,1\}$ and a nonnegative integer $s$, let
\begin{align}
    H^{k,s}_{0,b}(M):=\{u\in H^k_0(M)\mid Qu\in H^k_0(M) \ \forall \ Q\in \mathrm{Diff}^s_b(M)\},
\end{align}
This carries a natural Hilbert space topology (\cite[\S 2.4]{Wrochna}), and we let $H^{-k,-s}_{0,b}(M)$ denote its dual space via the $L^2$-pairing. Then,
\begin{align}
    H^{k,\infty}_{0,b}(M):=\bigcap_{s}H^{k,s}_{0,b}(M)
\end{align}
is the space of conormal distributions respective to $H^k_0(M)$, equipped with its canonical Fr\'{e}chet space topology. The space $H^{-k,-\infty}_{0,b}(M)$ is defined as the topological dual of $H^{k,\infty}_{0,b}(M)$. 

Let $H^{k,\pm\infty}_{0,b,c}(M)$ denote the subspaces of compactly supported elements, and let $H^{k,\pm\infty}_{o,b,loc}(M)$ denote the space of distributions whose localization with test functions (smoothly extendible across the boundary) belongs to $H^{k,\pm\infty}_{0,b}(M)$. Once appropriately topologized as inductive limits of Fr\'{e}chet spaces, $H^{k,\infty}_{0,b,loc}(M)$ is in duality with $H^{-k,-\infty}_{0,b,c}(M)$ and $H^{k,\infty}_{0,b,c}(M)$ is in duality with $H^{-k,-\infty}_{0,b,loc}(M)$. 

Over the interior $M^o$, $H^{k,\infty}_{0,b,loc}(M)$ coincides with $C^\infty(M^o)$ and $H^{k,-\infty}_{0,b,loc}(M)$ with $\mc{D}'(M^o)$. The spaces $H^{k,\infty}_{0,b,c}(M)$ provide suitable replacements of $C_c^\infty(M^o)$ in this context \cite{DW,Vasy,Wrochna}.

Denote the spaces of past/future supported elements of $H^{k,\infty}_{0,b,loc}(M)$ by
\begin{align}
    H^{k,\infty}_{0,b,\pm}(M):=\{u\in H^{k,\infty}_{0,b,loc}(M)\mid \mathrm{supp}(u)\subseteq\{\pm t\geq \pm t_0\}, \ \textnormal{some} \ t_0\in\R\}.
\end{align}
By \cite[Theorem 1.6]{Vasy}, there exist continuous operators 
\begin{align}
    P_{\pm}^{-1}:H^{-1,\infty}_{0,b,\pm}(M)\to H^{1,\infty}_{0,b,\pm}(M)
\end{align}
such that $PP_{\pm}^{-1}=\id$ on $H^{-1,\infty}_{0,b,\pm}(M)$ and $P_{\pm}^{-1}P=\id$ on $H^{1,\infty}_{0,b,\pm}(M)$. The difference
\begin{align}
    G:=P_+^{-1}-P_{-}^{-1}:H^{-1,\infty}_{0,b,c}(M)\to H^{1,\infty}_{0,b,loc}(M)
\end{align}
is the analogue of the causal propagator on globally hyperbolic spacetimes. By \cite[Proposition 3.4]{DW} (see also \cite[Proposition 3.1]{Wrochna}), the $\R$-bilinear form $\sigma:=\la \cdot,G\cdot\ra_{L^2(M)}$ induces a non-degenerate symplectic form on the (real) quotient space 
\begin{align}
    \mc{X}:=H^{-1,\infty}_{0,b,c}(M;\R)/PH^{1,\infty}_{0,b,c}(M;\R).
\end{align}
The symplectic space ($\mc{X},\sigma)$ is canonically isomorphic to a symplectic space of solutions to $Pu=0$ \cite[Proposition 3.1]{Wrochna}. Moreover, the dual $\mc{X}'$ is canonically identified with the distributional solutions $u\in H^{1,-\infty}_{0,b,loc}(M)$ to $Pu=0$.

\subsection{Bulk-to-boundary correspondence}

Let $\nu_{\pm}:=\frac{d}{2}\pm\nu$ be the indicial roots of $P$. Throughout we assume the Breitenlohner-Freedman bound $\nu>0$ \cite{BZ82} and we denote $\nu_+$ by $\Delta$. By \cite[Proposition 3.5]{DW} (see also \cite[Proposition 3.7]{Wrochna}), any distributional solution $u\in\mc{X}'$ admits a representation of the form $u=z^{\Delta}v$ for some $v\in C^\infty([0,\ep)_z;\mc{D}'(\partial M))$, and the  \textit{bulk-to-boundary} map 
\begin{align}
    \partial:\mc{X}'\ni u\mapsto v|_{\partial M}\in \mc{D}'(\partial M)
\end{align}
is continuous (with respect to the natural topologies on $H^{1,-\infty}_{0,b,loc}(M)$ and $ \mc{D}'(\partial M)$). 

We now show that the bulk-to-boundary map intertwines the canonical actions of AdS isometries in the bulk and their conformal manifestations on the boundary. To this end, let
\begin{equation}\label{e:scalar}(x,y)=x^0y^0+x^{d+1}y^{d+1} - x^1y^1 - \cdots - x^{d}y^{d}=x^0y^0+x^{d+1}y^{d+1}-\vec{x}\cdot\vec{y},\end{equation}
where $\vec{x}=(x^1,...,x^{d})$. Let $G=SO(2,d)$ denote the group of real linear transformations of $\R^{d+2}$ which preserve the scalar product, given in equation \eqref{e:scalar}. Let $G_0$ denote the identity component and  $\widetilde{G_0}$ denote its universal cover. Then $\widetilde{G_0}$ acts as isometries on $M=(AdS_{d+1},g)$, and conformal diffeomorphisms on $(AdS_{d+1},\tilde{g})$. Note that the conformal factor $\Omega_{\Lambda}$ for $\Lambda\in \widetilde{G_0}$ satisfies
\begin{equation}\label{e:conf}\Omega_{\Lambda}^2\tilde{g}=\tilde{g}\circ\Lambda=(z\circ\Lambda)^2g=\bigg(\frac{z\circ\Lambda}{z}\bigg)^2\tilde{g}, \ \ \ \textnormal{on $AdS_{d+1}^o$}.
\end{equation}

Since each $\Lambda\in \widetilde{G_0}$ leaves the boundary $\partial M$ invariant, and the canonical action $\widetilde{G_0}\curvearrowright L^2(M,g)$,
\begin{equation}\label{e:action}
    \Lambda \cdot u=u\circ\Lambda^{-1}, \qquad 
    \Lambda\in \widetilde{G_0},
\end{equation}
is isometric, it follows that equation \eqref{e:action} induces  continuous actions on the Sobolev spaces $H^{k}_0(M)$ and $H^{k,s}_{0,b}(M)$. One then obtains continuous actions on $H^{k,\infty}_{0,b,c}(M)$ and $H^{k,\infty}_{0,b,loc}(M)$ in the canonical manner and by duality, continuous actions on $H^{-k,-\infty}_{0,b,loc}(M)$ and $H^{-k,-\infty}_{0,b,c}(M)$. Since each $\Lambda\in\widetilde{G_0}$ commutes with the Klein--Gordon operator $P$ on the pertinent function space, it follows by uniqueness of $P^{-1}_{\pm}$ that 
\begin{align}
    \Lambda^{-1}P^{-1}_{\pm}\Lambda = P^{-1}_{\pm} .
\end{align}
In particular, $\Lambda^{-1}G\Lambda = G$, so that $\Lambda$ induces via equation \eqref{e:action} a symplectic automorphism of $\mc{X}$. 

\begin{lem}\label{l:intertwine} For each $\Lambda\in\widetilde{G_0}$, $u\in\mc{X}'$,
\begin{equation}
\label{e:intertwine}
   \partial(\Lambda'(u))= \Omega^{\Delta}|_{\partial M}\cdot(\Lambda|_{\partial M})'\partial(u),
\end{equation}
where $\Lambda'$ is the dual action on $\mc{X}'$, and $\cdot$ denotes multiplication of distributions by smooth functions.
\end{lem}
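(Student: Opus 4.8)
The plan is to push everything down to the boundary asymptotics $u=z^{\Delta}v$ and to let the conformal weight $\Delta$ manufacture the conformal factor. First I would pin down the dual action $\Lambda'$ on $\mc{X}'$ under its identification with distributional solutions. A solution $u$ pairs with a class $[f]\in\mc{X}$ through the $L^2$-pairing $\la u,f\ra$ (well defined on the quotient since $\la u,Pg\ra=\la Pu,g\ra=0$). Because the action \eqref{e:action} of $\Lambda$ on $L^2(M,g)$ is isometric, hence volume-preserving, a change of variables gives $\la u,f\circ\Lambda^{-1}\ra=\la u\circ\Lambda,f\ra$, so the transpose of \eqref{e:action} acts on solutions by $\Lambda'(u)=u\circ\Lambda$. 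That this is again an element of $\mc{X}'$ follows from $\Lambda^{-1}G\Lambda=G$ (equivalently, $\Lambda$ commuting with $P$), which gives $P(u\circ\Lambda)=0$.

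Next I would invoke the expansion of \cite[Proposition 3.5]{DW} to write $u=z^{\Delta}v$ with $v\in C^\infty([0,\ep)_z;\mc{D}'(\partial M))$ and $\partial(u)=v|_{\partial M}$, and apply the same expansion to the solution $\Lambda'(u)=u\circ\Lambda$ to read off its boundary value. Working near $\partial M$ in coordinates $(z,y)$, $y\in\partial M$, the identity \eqref{e:conf} yields $z\circ\Lambda=\Omega_\Lambda\,z$ with $\Omega_\Lambda$ smooth and strictly positive up to $\partial M$, while the boundary component of $\Lambda$ reduces to $\Lambda|_{\partial M}$ at $z=0$ since $\Lambda$ preserves $\partial M$. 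Hence
\begin{equation*}
u\circ\Lambda=(z\circ\Lambda)^{\Delta}\,(v\circ\Lambda)=z^{\Delta}\,\tilde v,\qquad \tilde v:=\Omega_\Lambda^{\Delta}\,(v\circ\Lambda)\in C^\infty([0,\ep)_z;\mc{D}'(\partial M)),
\end{equation*}
so that $\partial(\Lambda'(u))=\tilde v|_{\partial M}$. Evaluating at $z=0$ gives $\tilde v|_{\partial M}=\Omega_\Lambda^{\Delta}|_{\partial M}\cdot(v|_{\partial M})\circ(\Lambda|_{\partial M})=\Omega_\Lambda^{\Delta}|_{\partial M}\cdot(\Lambda|_{\partial M})'\partial(u)$, which is exactly \eqref{e:intertwine}.

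The main obstacle is rigor at the distributional level rather than the formal computation: the symbol $v\circ\Lambda$ above is only a shorthand, since $\Lambda$ genuinely mixes $z$ with the boundary variables, so one must verify that composition by $\Lambda$ preserves the conormal form $z^{\Delta}v$ — i.e., that $\Lambda$ is a $b$-diffeomorphism fixing the boundary defining function up to the smooth positive factor $\Omega_\Lambda$ of \eqref{e:conf} — and that the extraction of the $z$-independent leading coefficient is legitimate for distribution-valued $v$. The cleanest route is to establish \eqref{e:intertwine} first for $u$ ranging over a dense subspace of smooth solutions, where every manipulation is classical and the displayed factorization is literal, and then extend to all of $\mc{X}'$ by continuity of $\partial$ (\cite[Proposition 3.5]{DW}), of the dual actions $\Lambda'$ and $(\Lambda|_{\partial M})'$, and of multiplication by the fixed smooth function $\Omega_\Lambda^{\Delta}|_{\partial M}$.
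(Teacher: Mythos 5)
Your formal computation is the same one that drives the paper's proof: the dual action on $\mc{X}'$ is the pushforward by $\Lambda^{-1}$ (composition with $\Lambda$, using invariance of the bulk $L^2(M,g)$-pairing under \eqref{e:action}), and the conformal factor is manufactured by $(z\circ\Lambda)^{\Delta}=\Omega_{\Lambda}^{\Delta}z^{\Delta}$ via \eqref{e:conf}, so that the coefficient of $z^{\Delta}$ in $\Lambda'(u)$ is $\Omega_{\Lambda}^{\Delta}\cdot\Lambda'(v)$. Where you diverge is in how the two technical issues you correctly flag are resolved. The paper does not regularize and extend by density: it makes sense of ``$v\circ\Lambda$'' directly, by observing that $\Lambda_1=\pi_1\circ\Lambda$ is a submersion onto $(0,\ep)$ near the boundary and invoking the theory of transversal distributions \cite{AS,LMV} to identify $\Lambda'(v)$ with the smooth family of fiberwise pushforwards $((\Lambda_z^{-1})_*(v(z)))_{z\in(0,\ep)}$; the passage to $z=0$ is then justified by hypocontinuity of the pairing $\mc{D}'(\partial M)\times C_c^\infty(\partial M)\to\R$ (both spaces being barrelled), applied to the bounded family $(\Lambda\cdot F(z,\cdot))_{z}$ converging to $\Lambda|_{\partial M}\cdot f$ in $C_c^\infty(\partial M)$.

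Two caveats about your alternative route. First, the density-and-continuity step is a genuine gap as stated: non-degeneracy of $\sigma$ only yields weak-$*$ density of the conormal solutions $G(H^{-1,\infty}_{0,b,c}(M;\R))$ in $\mc{X}'$, whereas the continuity of $\partial$ supplied by \cite[Proposition 3.5]{DW} is with respect to the natural (strong) topology on $H^{1,-\infty}_{0,b,loc}(M)$. You would therefore need either density of smooth solutions in that stronger topology (a regularization statement for solutions of $Pu=0$ that is not available in \cite{DW} and would require its own argument, e.g.\ via propagation of conormal regularity), or weak-$*$-to-weak-$*$ continuity of $\partial$, which amounts to $\partial'(C_c^\infty(\partial M))\subseteq\mc{X}$ rather than the weaker inclusion $\partial'(C_c^\infty(\partial M))\subseteq\mc{X}^{cpl}$ that is actually proved. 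Second, when you evaluate at $z=0$ you must interpret $(v|_{\partial M})\circ(\Lambda|_{\partial M})$ as the pushforward $((\Lambda|_{\partial M})^{-1})_*$, i.e.\ as the transpose $(\Lambda|_{\partial M})'$ appearing in \eqref{e:intertwine}. Unlike the bulk action, $\Lambda|_{\partial M}$ is only conformal, not measure-preserving, on $\partial M$, so literal composition and the dual action differ by the boundary Jacobian, a power of $\Omega|_{\partial M}$; conflating the two would shift the exponent $\Delta$ in \eqref{e:intertwine}. The paper's fiberwise-pushforward formalism keeps this bookkeeping automatic, which is precisely why it works at the distributional level without a density argument.
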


\begin{proof} 
Throughout the proof we let $W=(0,\ep)_z\times\partial M_x$ and $N=\Lambda^{-1}(W)$. Write 
$$\Lambda_1=\pi_1\circ\Lambda ,$$ 
where $\pi_1:W\to(0,\ep)$ is the projection onto the first coordinate. Then, $\Lambda_1$ is a submersion from $N\to(0,\ep)$. Noting that the dual action of $\Lambda$ on distributions is the pushforward by $\Lambda^{-1}$, it follows by properness of $\Lambda^{-1}$ and \cite[Proposition 2.18]{LMV} that for any $v\in C^\infty((0,\ep)_z;\mc{D}'(\partial M))$, $\Lambda'(v)$ is transversal on $N$ with respect to $\Lambda_1$ \cite{AS} (see also \cite[Definition 2.3]{LMV}), meaning $(\Lambda_1)_*(\Lambda'(v).F)\in C_c^\infty((0,\ep))$ for all $F\in C_c^\infty(N)$. It follows that \begin{equation}\label{e:transversal}
    (\Lambda_1)_*(\Lambda'(v).F)(z)=\la v(z),(\Lambda\cdot F)(z,\cdot)\ra = \la(\Lambda^{-1}_z)_*(v(z)),F|_{\Lambda_1^{-1}\{z\}\cap N} \ra, \ \ \ z\in (0,\ep),
\end{equation}
where $\Lambda^{-1}_z:\partial M\to \Lambda_1^{-1}\{z\}\cap N$ is simply $\Lambda^{-1}_z(x)=\Lambda^{-1}(z,x)$. 
Indeed, given $g\in C^\infty_c((0,\ep))$ and $(z,x)\in W$, we have
$$(F\cdot(g\circ\Lambda_1))\circ\Lambda^{-1}(z,x)=F(\Lambda^{-1}(z,x))g(z),$$
implying
\begin{align*}\la (\Lambda_1)_*(\Lambda'(v).F),g\ra&=\la v,(F\cdot(g\circ\Lambda_1))\circ\Lambda^{-1}\ra\\
&=\la v,(\Lambda\cdot F)(g\ten 1)\ra\\
&=\int \la v(z),(\Lambda\cdot F)(z,\cdot)\ra g(z) \ dz.
\end{align*}
Smoothness entails equation \eqref{e:transversal}. By \cite[Proposition 2.7]{LMV}, it follows that $$\Lambda'(v) = ((\Lambda^{-1}_{z})_*(v(z))_{z\in(0,\ep)}$$
is a smooth family of distributions on the fibers $\Lambda_1^{-1}\{z\}\cap N$ (in the sense of \cite[Definition 2.6]{LMV}).

Let $u\in\mc{X}'$ be of the form $u=z^{\Delta}v$,  $v\in C^\infty([0,\ep)_z;\mc{D}'(\partial M))\subseteq C^\infty((0,\ep)_z;\mc{D}'(\partial M))$ and fix $f\in C^\infty_c(\partial M)$. Define $F:M\to\R$ by $F(z,x)=f(x)$. On the one hand, the family $(\Lambda\cdot F(z,\cdot))_{z\in(0,\ep)}$ is bounded in the barrelled  locally convex space $C_c^\infty(\partial M)$ (see \cite[Definition 33.1]{Treves}). Moreover, 
$$\lim_{z\to0}\Lambda\cdot F(z,x)=\Lambda|_{\partial M}\cdot f(x),$$
uniformly in $\mathrm{supp}(f)$. The same is true for all derivatives of $f$ (and corresponding partials of $F$), so that 
$$\lim_{z\to0}\Lambda\cdot F(z,\cdot)=\Lambda|_{\partial M}\cdot f  \ \ \ \textnormal{in $C_c^\infty(\partial M)$}.$$
Since $v$ is continuous and the spaces $C^\infty_c(\partial M)$ and $\mc{D}'(\partial M)$ are barrelled, it follows that the dual pairing
$$\la\cdot,\cdot\ra:\mc{D}'(\partial M)\times C_c^\infty(\partial M)\to\R$$
is hypocontinuous \cite[Theorem 41.2]{Treves}, implying, by boundedness of the family $(\Lambda\cdot F(z,\cdot))_{z\in(0,\ep)}$, that
$$\la v(0),\Lambda|_{\partial M}\cdot f\ra=\lim_{z\to 0}\la v(z),\Lambda\cdot F(z,\cdot)\ra=\lim_{z\to 0}\la (\Lambda^{-1}_z)_*(v(z)),F|_{\Lambda_1^{-1}\{z\}\cap N}\ra .$$

On the other hand, $\Lambda'(u)\in\mc{X}'$, so is of the form $z^{\Delta}v_{\Lambda}$, with $v_{\Lambda}\in C^\infty([0,\ep)_z;\mc{D}'(\partial M))$. But
$$z^{\Delta}v_{\Lambda}=\Lambda'(u)=\Lambda'(z^{\Delta}v)=(z\circ\Lambda)^{\Delta}\Lambda'(v), \ \ \ z\in(0,\ep),$$
so that 
$$v_\Lambda=\Omega^{\Delta}\cdot\Lambda'(v) = (\Omega^{\Delta}_z\cdot(\Lambda^{-1}_{z})_*(v(z))_{z\in(0,\ep)} ,$$ 
where $\Omega^{\Delta}_z=\Omega^{\Delta}|_{\Lambda_1^{-1}\{z\}\cap N}$. By smoothness of $\Omega^{\Delta}_z$, $\Lambda^{-1}_{z}$ and $v$ in $z$, it follows that
$$\la \partial(\Lambda'(u)),f\ra=\la v_{\Lambda}(0),f\ra=\lim_{z\to 0}\la \Omega^{\Delta}_z\cdot(\Lambda^{-1}_{z})_*(v(z)),F|_{\Lambda_1^{-1}\{z\}\cap N}\ra .$$
Putting things together, we see that
\begin{align*}\la(\Lambda|_{\partial M})'\partial (u),f\ra&=\la\partial (u),\Lambda|_{\partial M}\cdot f\ra=\la v(0),\Lambda|_{\partial M}\cdot f\ra\\
&=\lim_{z\to 0}\la (\Lambda^{-1}_z)_*(v(z)),F|_{\Lambda_1^{-1}\{z\}\cap N}\ra\\
&=\lim_{z\to 0}\la \Omega^{-\Delta}_z\Omega^{\Delta}_z\cdot(\Lambda^{-1}_{z})_*(v(z)),F|_{\Lambda_1^{-1}\{z\}\cap N}\ra\\
&=\la \Omega^{-\Delta}|_{\partial M}\cdot \partial(\Lambda'(u)),f\ra.
\end{align*}

\end{proof}

\subsection{Subregion duality and quantum error correction}
\label{s:subregionQEC}

Let $\eta:\mc{L}(\mc{X},\mc{X}')$ be a continuous, positive,  symmetric linear map satisfying
\begin{equation}\label{e:sigma}|\sigma(u_1,u_2)|\leq\la u_1,\eta(u_1)\ra^{1/2}\la u_2,\eta(u_2)\ra^{1/2}, \ \ \ u_1,u_2\in \mc{X}.\end{equation}
By definition of $\mc{X}$ as a quotient space, $\eta$ must be a bi-solution:
\begin{align}
    P\circ\eta=0\quad\text{on}\quad H^{-1,\infty}_{0,b,c}(M),\qquad
    \eta\circ P=0\quad\text{on}\quad H^{1,\infty}_{0,b,c}(M) .
\end{align} 
Moreover, continuity means that
\begin{align}
    \eta:H^{-1,\infty}_{0,b,c}(M)\to H^{1,-\infty}_{0,b,loc}(M)
\end{align}
is continuous.
The above properties are satisfied by the covariances of any holographic Hadamard state in the sense of \cite{Wrochna}. In particular, for the covariance of the global vacuum on $M$, which is a ground state for the static dynamics (see \cite[\S4.2]{Wrochna} and \cite{G19,KayWald,DG} for details).

We let $\mc{X}^{cpl}$ denote the completion of $\mc{X}$ with respect to the Euclidean norm induced by $\eta$. The boundedness conditions in equation \eqref{e:sigma} ensure that $\sigma$ extends to a (pre-)symplectic form $\sigma^{cpl}$ on $\mc{X}^{cpl}$. By \cite[Lemma 2.1]{DW}, it follows that the restriction of $\partial':\mc{D}''(\partial M)\to\mc{X}''$ to $C_c^\infty(\partial M)$ satisfies $\partial'(C^\infty_c(\partial M))\subseteq\mc{X}^{cpl}$. 

For open subsets $O\subseteq\partial M$ and $V\subseteq M$, we define 
\begin{align}
\begin{aligned}
    \mc{X}_{bd}(O) & :=C_c^\infty(O) ,\\
    \mc{X}_{bulk}(V) & :=H^{-1,\infty}_{0,b,c}(V;\R)/(PH^{1,\infty}_{0,b,c}(M;\R)\cap H^{-1,\infty}_{0,b,c}(V;\R)),
\end{aligned}
\end{align}
which becomes a (pre-)symplectic space when canonically viewed as a subspace of $(\mc{X},\sigma)$. Let
\begin{align}
    \mc{A}_{bd}(O):=CCR(\overline{\partial'(C_c^\infty(O))},\sigma^{cpl}) \quad\textnormal{and}\quad \mc{A}_{bulk}(V):=CCR(\overline{\mc{X}_{bulk}(V)},\sigma^{cpl})
\end{align}
be the Weyl $C^*$-algebras generated by the (pre-)symplectic spaces $(\overline{\partial'(C_c^\infty(O))},\sigma^{cpl})$ and $(\overline{\mc{X}_{bulk}(V)},\sigma^{cpl})$, respectively, where the closures are with respect to $\eta$. Then, $\{\mc{A}_{bd}(O)\}$ and $\{\mc{A}_{bulk}(V)\}$ define isotonic nets of $C^*$-algebras satisfying
$\mc{A}_{bulk}(V(O))\subseteq\mc{A}_{bd}(O)$ by \cite[Theorem 3.7]{DW}, where $V(O)$ is the \textit{domain of uniqueness} of $O\subseteq\partial M$, defined as the maximal open subset of $M$ such that for any  $u\in H^{1,-\infty}_{0,b,loc}(M)$ satisfying $Pu=0$ on $M$, we have $\partial u=0$ on $O$ implies $u=0$ on $V(O)$. 
As mentioned in \cite[\S 3.5]{DW}, since (the conformal image of) $(M,g)$ is analytic, it follows from Holmgren's theorem \cite[Theorem 8.6.5]{Horm} that $V(O)\neq\emptyset$ for arbitrary open $O\subseteq\partial M$. 

Following Ribeiro's notation \cite[\S1.2.2]{Ribeiro07}, to which we refer the reader for details, given $r\in \partial M$, let $\overline{r}$ denote its antipodal point, the unique point where all null geodesics emanating from $r$ will refocus. Let 
\begin{align}
    \mathrm{Min}(r):=I^+(r,\partial M)\cap I^{-}(\overline{\overline{r}},\partial M)
\end{align}
be the associated Minkowski domain to the future of $r\in\partial M$ (see \cite[Figure 1.2]{Ribeiro07}). Given $p,q\in \mathrm{Min}(r)$ with $q\in I^+(p,\partial M)$, let
\begin{align}
    \mc{D}_{p,q}:=I^+(p,\partial M)\cap I^{-}(q,\partial M),\quad
    \mc{W}_{p,q}:=I^+(p,M)\cap I^{-}(q,M)\cap M^o
\end{align}
respectively denote the associated boundary diamond and the bulk causal wedge. 



\begin{prop}\label{p:inclusion} $\mc{W}_{p,q}\subseteq V(\mc{D}_{p,q})$. Consequently, $\mc{A}_{bulk}(\mc{W}_{p,q})\subseteq\mc{A}_{bd}(\mc{D}_{p,q})$. 
\end{prop}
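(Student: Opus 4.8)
The plan is to decouple the geometric core of the statement, the inclusion of open sets $\mc{W}_{p,q}\subseteq V(\mc{D}_{p,q})$, from the algebraic conclusion, which is then purely formal. Indeed, once the inclusion of sets is in hand, the second assertion is immediate: since $\{\mc{A}_{bulk}(V)\}$ is an isotonic net, $\mc{W}_{p,q}\subseteq V(\mc{D}_{p,q})$ gives $\mc{A}_{bulk}(\mc{W}_{p,q})\subseteq\mc{A}_{bulk}(V(\mc{D}_{p,q}))$, and the established inclusion $\mc{A}_{bulk}(V(O))\subseteq\mc{A}_{bd}(O)$ of \cite[Theorem 3.7]{DW}, applied with $O=\mc{D}_{p,q}$, yields $\mc{A}_{bulk}(\mc{W}_{p,q})\subseteq\mc{A}_{bd}(\mc{D}_{p,q})$. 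Thus all of the work lies in the geometric inclusion, and I would phrase the entire argument around the defining maximality of $V(\mc{D}_{p,q})$.

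For that inclusion I would exploit the fact that $V(\mc{D}_{p,q})$ is, by definition, the \emph{largest} open set on which every solution $u\in H^{1,-\infty}_{0,b,loc}(M)$ of $Pu=0$ with $\partial u=0$ on $\mc{D}_{p,q}$ must vanish. Since $\mc{W}_{p,q}=I^+(p,M)\cap I^-(q,M)\cap M^o$ is open, it suffices to check that this uniqueness property \emph{already} holds on $\mc{W}_{p,q}$; maximality then forces $\mc{W}_{p,q}\subseteq V(\mc{D}_{p,q})$. So I would fix such a $u$ and argue that it vanishes throughout the wedge. Because $u$ lies in the solution class with the expected $z^{\Delta}$ leading asymptotics (the $z^{\nu_-}$ branch being excluded under the Breitenlohner--Freedman bound by the $H^1_0$ regularity), the hypothesis $\partial u=0$ on $\mc{D}_{p,q}$ means that the full boundary data of $u$ vanish there. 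As the conformal metric $\tilde g=z^2g$ is analytic, Holmgren's theorem \cite[Theorem 8.6.5]{Horm} propagates this off the boundary, so that $u$ vanishes in a one-sided bulk neighborhood of $\mc{D}_{p,q}$.

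It then remains to push the zero set all the way through $\mc{W}_{p,q}$. Here I would foliate the wedge by spacelike analytic hypersurfaces anchored on $\mc{D}_{p,q}$ at the conformal boundary and sweeping inward toward the bulk bifurcation edge, continuing the vanishing across this foliation by repeated Holmgren continuation across non-characteristic surfaces. The causal-geometric input guaranteeing that such a foliation covers exactly $\mc{W}_{p,q}$, and no more, is that $\mc{W}_{p,q}$ is the bulk domain of dependence of the boundary diamond $\mc{D}_{p,q}$: for $x\in\mc{W}_{p,q}$ one has $p\ll x\ll q$, whence the boundary causal shadow $J^{\pm}(x,M)\cap\partial M$ is contained in $\overline{\mc{D}_{p,q}}$, so no data outside $\mc{D}_{p,q}$ can reach $x$. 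Since all the data hypotheses are $\widetilde{G_0}$-equivariant by Lemma~\ref{l:intertwine}, I would also use the transitive action on such configurations to reduce the causal bookkeeping to a single reference diamond.

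The hard part will be the global causal structure of the universal cover of AdS, where null geodesics emanating from a boundary point refocus at its antipodal point $\overline{r}$; this is precisely what the standing hypothesis $p,q\in\mathrm{Min}(r)$ of \cite{Ribeiro07} is designed to control, confining the configuration to a single Minkowski patch of $\partial M$ so that the shadow computation behaves as in Minkowski space and $\mc{W}_{p,q}$ is a genuine double-cone domain of dependence with no wrap-around. The second delicate point will be making the Holmgren continuation rigorous in the conormal Sobolev spaces of \cite{Vasy,Wrochna}, and in particular keeping the sweeping foliation non-characteristic up to the bifurcation edge and the conformal boundary, where the wedge degenerates and the spacelike slices become tangent to the null horizons.
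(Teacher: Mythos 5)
Your reduction of the algebraic claim to the set inclusion $\mc{W}_{p,q}\subseteq V(\mc{D}_{p,q})$ via isotony, maximality of $V(\cdot)$, and \cite[Theorem 3.7]{DW} matches the paper exactly, and your first step (Holmgren's theorem giving vanishing of $u$ on a one-sided bulk neighborhood of $\mc{D}_{p,q}$, i.e., that $V(\mc{D}_{p,q})$ is a nonempty open neighborhood of the diamond) is also the paper's only use of analyticity. The gap is in the core propagation step: you assert that the vanishing can be swept from that collar through the entire wedge by ``repeated Holmgren continuation'' along a spacelike analytic foliation, but you give no construction of such a foliation and you yourself flag the two places where it would break down --- the degeneration of the slices at the bifurcation edge and at the conformal boundary, and the validity of Holmgren continuation in the conormal spaces $H^{1,-\infty}_{0,b,loc}(M)$. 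What you are describing is essentially a timelike-tube/unique-continuation theorem for asymptotically AdS spacetimes (cf. \cite{Stroh,SW24}), which is not available off the shelf in this setting and is precisely the content that needs proof; as written, the main step is a plan rather than an argument. Your causal bookkeeping is also shakier than stated: since $\partial M$ is timelike, $J^{\pm}(x,M)\cap\partial M$ for $x\in\mc{W}_{p,q}$ is \emph{not} contained in $\overline{\mc{D}_{p,q}}$ (causal curves from $x$ reach boundary points arbitrarily far in the past/future), so the ``domain of dependence'' heuristic does not directly close the argument.

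The paper avoids all of this with a different mechanism, which is worth internalizing because it is why Lemma~\ref{l:intertwine} was proved at all. Given $u$ with $\partial(u)=0$ on $\mc{D}_{p,q}$, the equivariance of the bulk-to-boundary map under the wedge-preserving one-parameter isometry group $(\Lambda_{p,q}(t))_{t\in\R}$ of \cite[Equation (1.26)]{Ribeiro07} shows that every translate $\Lambda_{p,q}(t)'(u)$ also has vanishing boundary data on $\mc{D}_{p,q}$ (the conformal factor is harmless), hence vanishes on $V(\mc{D}_{p,q})$; equivalently $u=0$ on $\Lambda_{p,q}(t)(V(\mc{D}_{p,q}))$ for all $t$. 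Since $\Lambda_{p,q}(t)(x)\to\mc{D}_{p,q}$ as $t\to\infty$ for every $x\in\mc{W}_{p,q}$, these translates of the (a priori small) uniqueness neighborhood cover the whole wedge, and no sweeping foliation or further unique continuation is needed. You invoke Lemma~\ref{l:intertwine} only to normalize to a reference diamond, which misses its actual role; if you want to complete your proof along the paper's lines, replace the Holmgren sweep by this isometry-flow argument.
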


\begin{proof} Suppose $u\in\mc{X}'$ satisfies $\partial(u)=0$ on $\mc{D}_{p,q}$. Let $(\Lambda_{p,q}(t))_{t\in\R}$ denote the canonical one-parameter group of isometries preserving the wedge $\mc{W}_{p,q}$ \cite[Equation (1.26)]{Ribeiro07} (with notational change $u=\Lambda$, $\lambda=t$). Since $\Lambda_{p,q}(t)|_{\partial M}$ leaves the  boundary diamond $\mc{D}_{p,q}$ invariant, by Lemma \ref{l:intertwine}, we have
$$\partial(\Lambda_{p,q}(t)'(u))=\Omega_{p,q}(t)^{\Delta}\cdot(\Lambda_{p,q}(t)|_{\partial M})'\partial(u)=0$$
on $\mc{D}_{p,q}$ for all $t\in\R$, where $\Omega_{p,q}(t)$ is the conformal factor of $\Lambda_{p,q}(t)$. Hence, $\Lambda_{p,q}(t)'(u)=0$ on $V(\mc{D}_{p,q})$ for all $t\in\R$. In other words, $u=0$ on $\Lambda_{p,q}(t)(V(\mc{D}_{p,q}))$ for all $t\in\R$. 

Pick $x\in\mc{W}_{p,q}$. Then, $\Lambda_{p,q}(t)(x)\to\mc{D}_{p,q}$ as $t\to\infty$. Since $V(\mc{D}_{p,q})$ is an open neighborhood of $\mc{D}_{p,q}$, we must have $U_x\subseteq\Lambda_{p,q}(t)(V(\mc{D}_{p,q}))$ for some $t$ and some neighbourhood $U_x$ of $x$. By above, $u=0$ on $U_x$, and since $x\in\mc{W}_{p,q}$ was arbitrary, $u=0$ on $\mc{W}_{p,q}$.
\end{proof}

For the rest of this section we let $\eta$ be the covariance of the global AdS vacuum state $\omega\in\A_{bulk}(M)^*$, which is a ground state for the Klein--Gordon dynamics (see, e.g., \cite[\S4.10.3]{G19}, \cite[\S18.3.2]{DG} or \cite[Lemma 4.4]{Wrochna} for related formulae in the complex formalism). Both $\eta$ and $\omega$ are invariant under the pertinent $\widetilde{G}_0$ action. Let $(\mc{H},\pi_F,\Omega)$ denote the associated GNS construction, which is a Fock representation (see, e.g., \cite[\S 4.9]{G19}). By invariance, the $\widetilde{G}_0$-action is unitarily implemented via $U:\widetilde{G}_0\to\mc{U}(\cH)$.


\begin{thm}\label{t:KG} Under the above hypothesis and notation, there exists an isometry 
$$V:L^2(\mc{A}_{bulk}(\mc{W}_{p,q}),\om)\to L^2(\mc{A}_{bd}(\mc{D}_{p,q}),\om)$$
between the GNS Hilbert spaces of $\om|_{\mc{A}_{bulk}(\mc{W}_{p,q})}$ and $\om|_{\mc{A}_{bd}(\mc{D}_{p,q})}$, such that the equivalent conditions of Theorem \ref{t:main} are satisfied with respect to the von Neumann algebras 
$$\mc{A}:=\pi_{\om}(\mc{A}_{bd}(\mc{D}_{p,q}))''\quad\text{and}\quad\mc{B}:=\pi_{\om}(\mc{A}_{bulk}(\mc{W}_{p,q}))'' .$$ 
\end{thm}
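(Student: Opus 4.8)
The plan is to build the isometry directly from the inclusion $\mc{A}_{bulk}(\mc{W}_{p,q})\subseteq\mc{A}_{bd}(\mc{D}_{p,q})$ of Proposition \ref{p:inclusion} and then to verify condition (1) of Theorem \ref{t:main}. Since the global vacuum $\om$ restricts to one and the same state on the two Weyl algebras, the universal property of the GNS construction supplies an isometry $V:L^2(\mc{A}_{bulk}(\mc{W}_{p,q}),\om)\to L^2(\mc{A}_{bd}(\mc{D}_{p,q}),\om)$ determined by $V\pi_{\om}(a)\Omega=\pi_{\om}(a)V\Omega$ for $a\in\mc{A}_{bulk}(\mc{W}_{p,q})$, where $\Omega$ and $V\Omega$ denote the bulk and boundary GNS vacua. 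By construction $V$ intertwines $\B$ with $\tilde{\mc{B}}:=\pi_{\om}(\mc{A}_{bulk}(\mc{W}_{p,q}))''\subseteq\A$ through a normal $*$-isomorphism $j:\B\to\tilde{\mc{B}}$ with $Vb=j(b)V$, and $V\mc{H}=\overline{\tilde{\mc{B}}\,V\Omega}$. The cyclic and separating hypotheses of Theorem \ref{t:main} then reduce to Reeh--Schlieder: cyclicity of $\Omega$ for $\B$ and of $V\Omega$ for $\A$ is immediate from the GNS construction, while the separating properties follow from cyclicity of $\Omega$ (resp. $V\Omega$) for the algebras of the causally complementary bulk wedge and boundary diamond, which are nonempty and, by locality, contained in $\B'$ (resp. $\A'$).

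With $\mc{E}=V^*(\cdot)V$, cyclicity of $V\Omega$ for both $\A$ and $\A'$ makes $(\id,V,\K)$ a minimal Stinespring representation for $\mc{E}|_{\A}$ and for $\mc{E}|_{\A'}$, with $(\mc{E}|_{\A})^c=\mc{E}|_{\A'}$ and $(\mc{E}|_{\A'})^c=\mc{E}|_{\A}$, exactly as in the proof of Theorem \ref{t:main}. Hence by the complementarity Theorem \ref{t:comp} it suffices to prove the two privacy inclusions $\mc{E}(\A')\subseteq\B'$ and $\mc{E}(\A)\subseteq\B$. The first is immediate: for $a'\in\A'$ and $b\in\B$ the relations $Vb=j(b)V$ and $bV^*=V^*j(b)$ give $[V^*a'V,b]=V^*[a',j(b)]V=0$, since $j(b)\in\tilde{\mc{B}}\subseteq\A$ commutes with $a'$. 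Thus $\mc{E}(\A')\subseteq\B'$, which is privacy of $\B$ for $\mc{E}|_{\A'}$ and therefore correctability of $\B$ for $\mc{E}|_{\A}$.

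The inclusion $\mc{E}(\A)\subseteq\B$ is the heart of the argument and is where the geometric modular structure enters. The point is that the vacuum modular flow $\sigma^{V\Omega}_t$ of the boundary diamond algebra $\A$ preserves $\tilde{\mc{B}}$. By the geometric modular theorem for boundary diamonds \cite{BGL}, combined with Lemma \ref{l:intertwine} and the bulk geometric modular structure \cite{BEM}, $\sigma^{V\Omega}_t$ acts geometrically as the boundary restriction of the wedge--preserving one-parameter group $\Lambda_{p,q}(t)$ from the proof of Proposition \ref{p:inclusion}; since $\Lambda_{p,q}(t)$ preserves $\mc{W}_{p,q}$ it carries $\mc{X}_{bulk}(\mc{W}_{p,q})$ into itself, so $\sigma^{V\Omega}_t(\tilde{\mc{B}})=\tilde{\mc{B}}$. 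Takesaki's theorem \cite{Takesaki72} then provides a normal, faithful, $\om$-preserving conditional expectation $E:\A\to\tilde{\mc{B}}$. Using $V\mc{H}=\overline{\tilde{\mc{B}}\,V\Omega}$ and $\om\circ E=\om$, for any $a\in\A$ and $\tilde b_1,\tilde b_2\in\tilde{\mc{B}}$ one computes $\la(a-E(a))\tilde b_1 V\Omega,\tilde b_2 V\Omega\ra=\om(\tilde b_2^*(a-E(a))\tilde b_1)=\om(\tilde b_2^* E(a-E(a))\tilde b_1)=0$, since $E(a-E(a))=0$. Hence $(a-E(a))V\mc{H}\perp V\mc{H}$, giving $\mc{E}(a)=V^*E(a)V=j^{-1}(E(a))\in\B$. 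This yields $\mc{E}(\A)\subseteq\B$, i.e. privacy of $\B'$ for $\mc{E}|_{\A}$ and hence correctability of $\B'$ for $\mc{E}|_{\A'}$; together with the previous paragraph this establishes condition (1), and Theorem \ref{t:main} supplies the remaining equivalent conditions.

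I expect the main obstacle to lie in the third paragraph: identifying the boundary vacuum modular flow as the wedge--preserving geometric flow $\Lambda_{p,q}(t)$. This is a Bisognano--Wichmann statement in the universal cover of AdS for the ground state $\om$, obtained by matching the boundary diamond flow of \cite{BGL} with the bulk wedge flow of \cite{BEM} through the intertwining of Lemma \ref{l:intertwine}, and it is precisely what guarantees $\sigma^{V\Omega}_t(\tilde{\mc{B}})=\tilde{\mc{B}}$ and thus the existence of the conditional expectation $E$. The Reeh--Schlieder inputs used for the cyclic and separating hypotheses are standard but must be justified for the specific Weyl algebras built from the symplectic spaces of \cite{DW}.
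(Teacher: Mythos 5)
Your overall skeleton matches the paper's: both arguments reduce the theorem to showing that the vacuum modular flow of the boundary diamond algebra is geometrically implemented by the wedge-preserving one-parameter group $\Lambda_{p,q}(t)$, so that it leaves the bulk wedge subalgebra invariant, Takesaki's theorem yields a vacuum-preserving conditional expectation, and complementary recovery follows. The paper outsources your second and third paragraphs to \cite[Theorem 1.1]{Gesteau:2020rtg}; your in-lined version of that argument (privacy of $\B$ from the intertwining relation, privacy of $\B'$ from the conditional expectation, then Theorem \ref{t:comp}) is correct in outline.

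The genuine gap is exactly the step you flag as the ``main obstacle,'' and flagging it is not the same as closing it. To invoke \cite[Theorem 2.3(ii)]{BGL} you must first know that the boundary net $\{\pi_F(\mc{A}_{bd}(O))''\}$ --- which is defined abstractly through the bulk-to-boundary map $\partial'$ of \cite{DW} and the completion $\mc{X}^{cpl}$ --- is a \emph{causal, conformally covariant} net in a vacuum representation with the requisite spectral properties. Lemma \ref{l:intertwine} only gives covariance of $\partial$ under the isometry/conformal group action; it gives neither causality of the boundary two-point function nor the analyticity underlying a Bisognano--Wichmann statement. The paper's proof is devoted almost entirely to supplying this: it identifies the GNS two-point function $\la u,\eta^{cpl}(v)\ra+\tfrac{i}{2}\la u,\sigma^{cpl}(v)\ra$ with the explicit global AdS vacuum two-point function (a Gegenbauer/Legendre function of $x\cdot x'$ arising as the boundary value of a function analytic in the tubes $T^{\pm}$), checks the tempered spectral condition of \cite{BEM} and the dimensional boundary condition at infinity of \cite{BBMS}, and then shows that the causal, conformally covariant boundary limit $W_2^\infty$ of \cite{BBMS} coincides with $\la\Omega,\vphi_F(\partial'(f))\vphi_F(\partial'(g))\Omega\ra$ via the weak limit $\partial'(f)=\lim_{z\to0}z^{-\Delta}\delta_z\ten f$ in $\mc{X}^{cpl}$. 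Only after this identification do \cite{BGL}, and hence the geometric modular action and the invariance $\sigma^{V\Omega}_t(\tilde{\mc{B}})=\tilde{\mc{B}}$, become available; your Reeh--Schlieder claims for the cyclic and separating hypotheses rest on the same unproven identification. Without this analytic input the third paragraph of your argument, and with it the whole proof, does not go through.
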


The proof amounts to showing that the bulk and boundary 2-point functions arising from the framework of \cite{DW} coincide with the known bulk and boundary 2-point functions for the global AdS vacuum, at which point one obtains KMS-conditions for the pertinent $C^*$-algebras, allowing one to appeal to a recent result of \cite[Theorem 1.1]{Gesteau:2020rtg} to conclude.

\begin{proof} 

In the GNS/Fock space the global 2-point function satisfies 
$$W_2(u,v):=\la\Omega,\vphi_F(u)\vphi_F(v)\Omega\ra=\la u,\eta^{cpl}(v)\ra+\frac{i}{2}\la u,\sigma^{cpl}(v)\ra, \ \ \ u,v\in\mc{X}^{cpl},$$
where $\om=\om_{\Omega}\circ\pi_{F}$, $\pi_F(W(x))=W_F(x)$, $\pi_F(\vphi(u))=\vphi_F(u)$ are the usual Fock space Weyl/field operators \cite[\S 4.9.3]{G19}. By the standard procedure for obtaining vacuum propagators in static spacetimes (see, e.g., \cite{DGass,DG,IW03}), it follows that in the interior $M^o$, the left hand side above coincides with the known 2-point function of the global AdS vacuum, which, for $x,x'$ in the fundamental sheet of $M^o$, satisfies
\begin{equation}\label{e:G}G^+(x,x')=\frac{\sqrt{\pi}\Gamma(d/2+\nu)}{\sqrt{2}(2\pi)^{(d+1)/2}2^\nu}\mc{Z}_{\frac{d+1}{2}-1,\nu}(x\cdot x'+i0),\end{equation}
where $\mc{Z}_{\frac{d+1}{2}-1,\nu}$ is the Gegenbauer function (see \cite[\S7]{DGass}, and note the opposite spacetime signature convention and $d+1$ dimensions). As the notation suggests, we view (\ref{e:G}) as the boundary limit, in the sense of distributions, of analytic functions in a suitable complexified tube domain (see below). Using the known relationship with Legendre functions of the second kind $\mc{Q}^\mu_\nu$ (\cite[Equation (B5)]{DGR}), and the fact that we are in the fundamental sheet, one easily sees that the above 2-point function coincides with (the real boundary limit of) the analytic 2-point Klein-Gordon functions from \cite[\S 6.1]{BEM} (and \cite[\S 4.2]{BBMS}), namely
$$G^+(z,z')=\frac{e^{-i\pi\frac{d-1}{2}}}{(2\pi)^{(d+1)/2}}(\zeta^2-1)^{-\frac{d-1}{4}}\mc{Q}^{\frac{d-1}{2}}_{\nu-1/2}(\zeta),$$
where $\zeta=z\cdot z'$ when $z\in T^{-}$ and $z'\in T^{+}$, where
\begin{align*}
T^{-}&=\{z=x+iy\in AdS_{d+1}^{c}\mid y^2>0, \epsilon(z)=1\},\\
T^{+}&=\{z=x+iy\in AdS_{d+1}^{c}\mid y^2>0, \epsilon(z)=-1\},
\end{align*}
$AdS_{d+1}^{c}$ is the complexified AdS, and 
\begin{align*}
    \epsilon(z)=\mathrm{sign}(y^0x^{d+1}-x^0y^{d+1}) .
\end{align*}
Here, $T^{\pm}$ are the AdS versions of the usual forward and backward tubes in Minkowski space (see \cite[\S4.1]{BBMS} and/or \cite[\S3]{BEM} for an extended discussion). Hence, the (quasi-free) AdS ground state 2-point function and associated $n$-point functions satisfy the tempered spectral condition of \cite[\S 4]{BEM}, as well as the dimensional boundary condition at infinity of \cite[\S 2.2]{BBMS}. In particular, by \cite{BBMS}, the boundary limit
$$W^\infty_2(f,g):=\lim_{z_1,z_2\to0}z_1^{-\Delta}z_2^{-\Delta}\widetilde{W}_2(\delta_{z_1}\ten f,\delta_{z_2}\ten g)$$
defines a causal, conformally covariant 2-point function on the boundary $\partial M$, where $\widetilde{W}_2$ is the natural extension of $W_2$ defined by the distributional limit of analytic functions given above (see also \cite{DR03} for similar results). Next, we show that $W^\infty_2$ coincides with $\la\Omega,\vphi_F(\partial'
(\cdot))\vphi_F(\partial'(\cdot))\Omega\ra$, the boundary 2-point function arising from \cite{DW}. 

By \cite[Lemma 2.1]{DW} $\eta:\mc{X}\to\mc{X}'$ extends continuously to $\eta^{cpl}:\mc{X}^{cpl}\to\mc{X}'$. For $z\in(0,\ep)$ and $f\in C^\infty_c(\partial M)$, the functional $z^{-\Delta}(\delta_z\ten f):\mc{X}'\to\R$ is continuous by virtue of the inclusion $\mc{X}'\subseteq z^{\delta}C^\infty([0,\ep);\mc{D}'(\partial M))$ (see \cite[Proposition 3.5]{DW}), and it follows that 
$$\partial'(f)=\lim_{z\to0}z^{-\Delta}\delta_z\ten f$$
weakly in $\mc{X}^{cpl}$. Indeed, for $u\in\mc{X}^{cpl}$, $\eta^{cpl}(u)\in\mc{X}'$ and we have
\begin{align*}\la\partial'(f),\eta^{cpl}(u)\ra&=\la f,\partial(\eta^{cpl}(u))\ra=\lim_{z\to 0}z^{-\Delta}\la f,\eta^{cpl}(u)(z)\ra\\
&=\lim_{z\to 0}z^{-\Delta}\la \delta_z\ten f,\eta^{cpl}(u)\ra.
\end{align*}
Thus, for $f,g\in C^\infty_c(\partial M)$, we have
$$\la\partial'(f),\eta^{cpl}(\partial'(g))\ra=\lim_{z_1,z_2\to0}z_1^{-\Delta}z_2^{-\Delta}\la\delta_{z_1}\ten f,\eta^{cpl}(\delta_{z_2}\ten g)\ra$$
Since $\sigma^{cpl}=-2\eta^{cpl}j^{cpl}$ \cite[Proposition 4.9.1]{G19} for an anti-involution $j^{cpl}\in O(X^{cpl},\eta^{cpl})$, a similar limiting expression applies to $\la\partial'(f),\sigma^{cpl}(\partial'(g))\ra$, and therefore to the 2-point function
$$\la\Omega,\vphi_F(\partial'(f))\vphi_F(\partial'(g))\Omega\ra=\lim_{z_1,z_2\to0}z_1^{-\Delta}z_2^{-\Delta}\widetilde{W}_2(\delta_{z_1}\ten f,\delta_{z_2}\ten g)=W^\infty_2(f,g).$$
By causality and conformal invariance of $W^\infty_2$, together with essential self-adjointness of the field operators $\vphi_F(\partial'(f))$ (over the standard finitely supported Fock domain) it follows that the associated net $\{\pi_F(\mc{A}_{bd}(O))''\}_{O\subseteq\mathrm{Min}(r)}$ defines a causal, conformally covariant pre-cosheaf of von Neumann algebras on the Minkowski patch $\mathrm{Min}(r)$. By \cite[Theorem 2.3(ii)]{BGL}, for any diamond region $\mc{D}_{p,q}\subseteq \mathrm{Min}(r)$ the vacuum modular flow $(\pi_F(\mc{A}_{bd}(\mc{D}_{p,q}))'',\om_{\Om})$ is geometrically implemented through $(U(\Lambda_{p,q}(t)))_{t\in\R}$, where $(\Lambda_{p,q}(t))_{t\in\R}$ is the one-parameter subgroup from \cite[Equation (1.26)]{Ribeiro07}. The vacuum $\om_\Omega$ is then a KMS state for the automorphism group $(\sigma^{p,q}_t)_{t\in\R}$ on $\pi_F(\mc{A}_{bd}(\mc{D}_{p,q}))''$. By unitary implementation of the action, it follows that $\sigma^{p,q}_t$ leaves the subalgebra $\pi_F(\mc{A}_{bulk}(\mc{W}_{p,q}))$ invariant. We are therefore in position to apply \cite[Theorem 1.1]{Gesteau:2020rtg} to achieve complementary recovery of the inclusion $\pi_F(\mc{A}_{bulk}(\mc{W}_{p,q}))\subseteq\pi_F(\mc{A}_{bd}(\mc{D}_{p,q}))$.
\end{proof}

\section{Outlook}\label{s:outlook}

Several natural lines of investigation are suggested by our work, which we now summarize.
\begin{itemize}
\item The framework of \cite{DW} utilized in Section \ref{s:4} applies to a large class of asymptotic anti-de Sitter spaces (in the sense of \cite[Definition 3.1]{DW}, see also \cite{Vasy,Wrochna}). It is of course natural to extend the results of Section \ref{s:4} to that setting. Techniques surrounding the time-like tube theorems of \cite{Stroh,SW24} could likely be relevant, together with the geometric foliations studied in \cite{Ribeiro07}. We plan to investigate this in future work.
\item The properties of (the dual of) the bulk-to-boundary map $\partial$, together with the established inclusion of bulk wedge algebras into boundary diamond algebras (Proposition \ref{p:inclusion}) suggests a potential connection with the notion of  projective holography studied in \cite{DR03} (see \cite{Rehren:1999jn} for the notion of algebraic holography).
\item Additional examples of operator algebraic subregion-subregion duality, for instance, in boundary chiral U(1)-current conformal field theories \cite{BMT}.
\end{itemize}


\begin{thebibliography}{99}

\bibitem{Almheiri:2014lwa}
A.~Almheiri, X.~Dong and D.~Harlow,
``Bulk Locality and Quantum Error Correction in AdS/CFT,''
JHEP \textbf{04}, 163 (2015).

\bibitem{Almheiri:2019psf}
A.~Almheiri, N.~Engelhardt, D.~Marolf and H.~Maxfield,
``The entropy of bulk quantum fields and the entanglement wedge of an evaporating black hole,''
JHEP \textbf{12}, 063 (2019).

\bibitem{Almheiri:2019hni}
A.~Almheiri, R.~Mahajan, J.~Maldacena and Y.~Zhao,
``The Page curve of Hawking radiation from semiclassical geometry,''
JHEP \textbf{03} (2020), 149.

\bibitem{AH} 
H.~Ando and U.~Haagerup, 
``Ultraproducts of von Neumann algebras,''
J. Funct. Anal. \textbf{266} (2014), no. 12, 6842-6913.

\bibitem{AS}
I.~Androulidakis and G.~Skandalis, ``Pseudodifferential calculus on a singular foliation,''
J. Noncommut. Geom. \textbf{5} (2011), no. 1, 125-152.

\bibitem{Araki73}
H.~Araki, 
``Relative Hamiltonian for faithful normal states of a von Neumann algebra,'' 
Publ. RIMS, Kyoto Univ. \textbf{9} (1973), 165-209.

\bibitem{Araki}
H.~Araki, 
``Relative Entropy of States of von Neumann Algebras,'' 
Publ. RIMS, Kyoto Univ. \textbf{11} (1976), 809-833.

\bibitem{ArakiII}
H.~Araki, 
``Relative entropy of states of von Neumann algebras II,''
Publ. RIMS, Kyoto Univ. \textbf{13} (1977), 173-192.

\bibitem{Arv69}
W.~B.~Arveson,,
``Subalgebras of $C^*$-algebras,''
Acta Math. {\bf 123} (1969), 141-224.

\bibitem{AIS78}
S.~J.~Avis, C.~J.~Isham and D.~Storey, 
``Quantum field theory in Anti-de Sitter space-time,'' Phys. Rev. D, 18:3565-3576, (1978).

\bibitem{BKK1}
C. B\'{e}ny, A. Kempf and D.W. Kribs, 
``Generalization of quantum error correction via the Heisenberg picture,''
Phys. Rev. Lett. \textbf{98} (2007), 100502.

\bibitem{BKK2} 
C. B\'{e}ny, A. Kempf and D. W. Kribs,
``Quantum error correction on infinite-dimensional Hilbert spaces'',
J. Math. Phys. \textbf{50} (2009), no. 6, 062108, 24 pp.


\bibitem{BBMS}
M.~Bertola, J.~Bros, U.~Moschella and R.~Schaeffer,
``A general construction of conformal field theories
from scalar anti-de Sitter quantum field theories,''
Nuclear Physics B \textbf{587} (2000), 619-644.


\bibitem{Bousso} 
R. Bousso, V. Chandrasekaran, P. Rath, A. Shahbazi-Moghaddam,
``Gravity dual of Connes cocycle flow,''
Phys. Rev. D 102 (2020), no. 6, 066008, 21 pp. 


\bibitem{BZ82} 
P.~Breitenlohner and D.~Z.~Freedman, 
``Positive energy in anti-de Sitter backgrounds and gauged
extended supergravity,''
Phys. Lett. B, 115(3):197-201, (1982).

\bibitem{BEM}
J.~Bros, H.~Epstein and U.~Moschella, 
``Towards a General Theory of Quantized Fields
on the Anti-de Sitter Space-Time,'' 
Commun. Math. Phys. \textbf{231} (2002), 481-528.


\bibitem{BGL} 
R.~Brunetti, D.~Guido and R.~Longo,
``Modular structure and duality in conformal quantum field theory,''
Comm. Math. Phys. \textbf{156} (1993), no. 1, 201-219.

\bibitem{BMT}  
D.~Buchholz, G.~Mack and I.~Todorov, 
``The current algebra on the circle as a germ of local field theories,'' 
Nuclear Phys. B Proc. Suppl. \textbf{5}, no. 2, 20-56, 1988.

\bibitem{Connes}
A.~Connes,
On the spatial theory of von Neumann algebras,
Journal of Functional Analysis \textbf{35} (1980), no. 2, 153-164.

\bibitem{Connes73}
A.~Connes,
Une classification des facteurs de type ${\rm III}$, 
In Annales Scientifiques de l'École Normale Supérieure, \textbf{6} (1973), no. 2, pp. 133-252.

\bibitem{CHPSS19}
J.~Cotler, P.~Hayden, G.~Penington,~G. Salton,~B. Swingle and M.~Walter, ``Entanglement
wedge reconstruction via universal recovery channels,'' Phys. Rev. X \textbf{9} (2019) no.3, 031011.

\bibitem{CKLT} 
J.~Crann, D.~W.~Kribs, R.~H.~Levene and I.~G.~Todorov,
``Private algebras in quantum information and infinite-dimensional complementarity,''
J. Math. Phys. \textbf{57} (2016), no. 1, 015208.


\bibitem{DF16}
C.~Dappiaggi and H.~R.~Ferreira,
``Hadamard states for a scalar field in anti-de Sitter spacetime with arbitrary boundary conditions,''
Phys. Rev. D \textbf{94} (2016), no. 12, 125016, 19 pp.

\bibitem{DGass}
J.~Derezi\'{n}ski, C.~Ga$\beta$,
``Propagators in curved spacetimes from operator theory,''
arXiv:2409.03279v1.

\bibitem{DGR}
J.~Derezi\'{n}ski, C.~Ga$\beta$ and B.~ Ruba,
``Generalized integrals in the theory of Bessel and Gegenbauer functions,''
Proceedings of Contemporary Mathematics, (to appear), 2023.

\bibitem{DG}
J.~Derezi\'{n}ski and C.~G\'{e}rard, 
\emph{Mathematics of Quantization and Quantum Fields}, 
Cambridge
Monographs in Mathematical Physics, Cambridge University Press (2013).


\bibitem{DM}
J.~Dixmier and O.~Mar\'{e}chal,  ``Vecteurs totalisateurs d'une alg\`{e}bre de von Neumann'', Commun. Math. Phys. \textbf{22} (1971), 44-50.

\bibitem{Dong:2016eik}
X.~Dong, D.~Harlow and A.~C.~Wall,
``Reconstruction of Bulk Operators within the Entanglement Wedge in Gauge-Gravity Duality,''
Phys. Rev. Lett. \textbf{117}, no. 2, 021601 (2016).



\bibitem{DR03}
M.~D\"{u}tsch and K.~-H.~Rehren, 
``Generalized free fields and the AdS-CFT correspondence,''
Ann. Henri Poincare 4 (2003) 613-635.

\bibitem{DW} 
W.~Dybalski and M.~Wrochna, 
``A mechanism for holography
for non-interacting fields on anti-de Sitter spacetimes,'' 
Class. Quantum Grav. \textbf{36} (2019), 085006 (14pp).

\bibitem{Faulkner:2020hzi}
T.~Faulkner,
``The holographic map as a conditional expectation,''
arXiv:2008.04810 [hep-th].

\bibitem{FL}
T.~Faulkner and M.~Li,
``Asymptotically isometric codes for holography'',
arXiv:2211.12439 [hep-th].


\bibitem{F74}
C.~Fronsdal,
``Elementary particles in a curved space. II,''
Phys. Rev. D 10, 589-598 (1974).

\bibitem{Gabbiani:1992ar}
F.~Gabbiani and J.~Frohlich,
``Operator algebras and conformal field theory,''
Commun. Math. Phys. \textbf{155}, 569-640 (1993).


\bibitem{G19}
C.~G\'{e}rard,
``Microlocal analysis of quantum fields on curved spacetimes,''
ESI Lect. Math. Phys.
EMS Publishing House, Berlin, (2019) viii+220 pp.

\bibitem{Gesteau:2020wrk}
E.~Gesteau and M.~J.~Kang,
``Holographic baby universes: an observable story,''
arXiv:2006.14620 [hep-th].

\bibitem{Gesteau:2020rtg}
E.~Gesteau and M.~J.~Kang,
``Thermal states are vital: Entanglement Wedge Reconstruction from Operator-Pushing,''
arXiv:2005.07189 [hep-th].

\bibitem{Gesteau:2020hoz}
E.~Gesteau and M.~J.~Kang,
``The infinite-dimensional HaPPY code: entanglement wedge reconstruction and dynamics,''
arXiv:2005.05971 [hep-th].

\bibitem{Gesteau:2021jzp}
E.~Gesteau and M.~J.~Kang,
``Nonperturbative gravity corrections to bulk reconstruction,''
Journal of Physics A: Mathematical and Theoretical \textbf{56}, no. 38, 385401 (2023).

\bibitem{Groh} 
U.~Groh, 
``Uniform ergodic theorems for identity preserving Schwarz maps on $W^*$-algebras,'' 
J. Operator Theory. \textbf{11} (1984), no. 2, 395-404.

\bibitem{Haag}
R.~Haag,
\emph{Local Quantum Physics: Fields, particles, algebras},
Texts and Monographs in Physics,
Springer-Verlag, 1992.



\bibitem{Haagerup75} 
U.~Haagerup,
``The standard from of von Neumann algebras,''
Math. Scand. \textbf{37} (1975), 271-283.

\bibitem{Hamilton:2006az}
A.~Hamilton, D.~N.~Kabat, G.~Lifschytz and D.~A.~Lowe,
``Holographic representation of local bulk operators,''
Phys. Rev. D \textbf{74}, 066009 (2006)

\bibitem{Harlow:2016vwg}
D.~Harlow,
``The Ryu\textendash{}Takayanagi Formula from Quantum Error Correction,''
Commun. Math. Phys. \textbf{354}, no. 3, 865-912 (2017).

\bibitem{Harlow:2018fse}
D.~Harlow,
``TASI Lectures on the Emergence of Bulk Physics in AdS/CFT,''
PoS \textbf{TASI2017}, 002 (2018).

\bibitem{Hiai}
F.~Hiai,
``Concise lectures on selected topics of von Neumann algebras'', 
arXiv:2004.02383 [math.OA].


\bibitem{Horm} 
L.~H{\"o}rmander,
\emph{The Analysis of Linear Partial Differential Operators I: Distribution Theory and Fourier Analysis},
Grundlehren der mathematischen Wissenschaften \textbf{256},
Springer Berlin Heidelberg, 1990.


\bibitem{IW03}
A.~Ishibashi and R.~M.~Wald, ``Dynamics in non-globally-hyperbolic static spacetimes. II. General analysis of prescriptions for dynamics''
Classical Quantum Gravity \textbf{20} (2003), no. 16, 3815-3826.

\bibitem{IW04}
A.~Ishibashi and R.~M.~Wald,
``Dynamics in nonglobally hyperbolic static space-times: 3. Anti-de Sitter space-time,'' Class. Quant. Grav., 21, (2004), 2981-3014.bi

\bibitem{JE} 
A.~ Jahn and J.~Eisert,
``Holographic tensor network models and quantum error correction: a topical review,'' Quantum Sci. Technol. 6 (2021) 033002.


\bibitem{KK}
M.~J.~Kang and D.~K.~Kolchmeyer,
``Holographic Relative Entropy in Infinite-Dimensional Hilbert Spaces,''
Commun. Math. Phys. \textbf{400}, no.3, 1665-1695 (2023).

\bibitem{Kang:2019dfi}
M.~J.~Kang and D.~K.~Kolchmeyer,
``Entanglement wedge reconstruction of infinite-dimensional von Neumann algebras using tensor networks,''
Phys. Rev. D \textbf{103}, no.12, 126018 (2021).

\bibitem{SpacetimeTN}
M.~J.~Kang and S.~Qasim,
``Modelling the spacetime lattice: discrete to continuum,''
To appear.

\bibitem{SpacetimeTN2}
M.~J.~Kang and S.~Qasim,
``Holographic tensor network in 3d spacetime,''
To appear.

\bibitem{KayWald}
B.~S.~Kay and R.~M.~Wald. 
``Theorems on the uniqueness and thermal properties of stationary, nonsingular, quasifree states on spacetimes with a bifurcate Killing horizon,'' 
Physics Reports \textbf{207} (1991), no. 2, 49-136.

\bibitem{Kelly:2016edc}
W.~R.~Kelly,
``Bulk Locality and Entanglement Swapping in AdS/CFT,''
JHEP \textbf{03}, 153 (2017)

\bibitem{KL97}
E.~Knill and R.~Laflamme,
``Theory of quantum error-correcting codes,''
Phys. Rev. A (3) 55 (1997), no.2, 900-911.

\bibitem{LMV}
J.-M.~Lescure, D.~Manchon and S.~Vassout,
``About the convolution of distributions on groupoids,''
J. Noncommut. Geom. \textbf{11} (2017), no. 2, 757-789.

\bibitem{Leutheusser:2021qhd}
S.~Leutheusser and H.~Liu,
``Causal connectability between quantum systems and the black hole interior in holographic duality,''
Phys. Rev. D \textbf{108}, no.8, 086019 (2023)

\bibitem{Longo79} 
R.~Longo, 
\textit{Notes on algebraic invariants for noncommutative dynamical systems}, 
Comm. Math. Phys. \textbf{69} (1979), 195-207.

\bibitem{Longo18}
R.~Longo,
``On Landauer's principle and bound for infinite systems,''
Comm. Math. Phys. \textbf{363} (2018), no. 2, 531-560.

\bibitem{Maldacena}
J.~M.~Maldacena, ``The Large N limit of superconformal field theories and supergravity,'' Adv. Theor. Math. Phys. (1998) \textbf{2}, 231-252.



\bibitem{OP} 
M.~Ohya and D.~Petz,
\emph{Quantum entropy and its use},
Texts and Monographs in Physics,
Springer-Verlag, Berlin, 1993, viii+335 pp.

\bibitem{Happy}
F.~Pastawski, B.~Yoshida, D.~Harlow and J.~Preskill,
``Holographic quantum error-correcting codes: toy models for the bulk/boundary correspondence,'' J. High Energy Phys. (2015), no. 6, 149.


\bibitem{Penington:2019npb}
G.~Penington,
``Entanglement Wedge Reconstruction and the Information Paradox,''
JHEP \textbf{09}, 002 (2020).

\bibitem{Petz} 
D.~Petz,
\textit{Sufficient subalgebras and the relative entropy of states of a von Neumann algebra},
Comm. Math. Phys. \textbf{105} (1986), no. 1, 123-131.

\bibitem{Raynaud} 
Y.~Raynaud, 
``On ultrapowers of non commutative $L_p$-spaces,'' 
J. Operator Theory \textbf{48} (2002), no. 1, 41-68.

\bibitem{RS72}
M.~Reed and B.~Simon,
``Methods of modern mathematical physics. I. Functional analysis,''
Academic Press, New York-London, 1972. xvii+325 pp.

\bibitem{Rehren:1999jn}
K.~H.~Rehren,
``Algebraic holography,''
Annales Henri Poincare \textbf{1}, 607-623 (2000)
doi:10.1007/PL00001009.



\bibitem{Ribeiro07} 
P.~Ribeiro, 
``Structural and Dynamical Aspects of the AdS-CFT Correspondence: a Rigorous Approach,'' 
Ph.D. Thesis, University of S\~{a}o Paulo (2007).

\bibitem{RT06}
S.~Ryu and T.~Takayanagi, ``Holographic derivation of entanglement entropy from
AdS/CFT,'' Phys. Rev. Lett. \textbf{96} (2006), 181602.


\bibitem{Stinespring}
W.~F.~Stinespring, 
``Positive functions on C*-algebras,''
Proceedings of the American Mathematical Society, \textbf{6} (1955), no 2, pp. 211-216.


\bibitem{Stroh}  
A.~Strohmaier, 
``On the local structure of the Klein-Gordon field on curved spacetimes,''
Lett. Math. Phys. \textbf{54} (2000), no.4, 249-261.

\bibitem{SW24}
A.~Strohmaier and E.~Witten,
``Analytic states in quantum field theory on curved spacetimes,''
Ann. Henri Poincar\'{e} \textbf{25} (2024), no. 10, 4543-4590.

\bibitem{Takesaki72} 
M.~Takesaki, 
``Conditional Expectations in von Neumann Algebras,'' 
J. Funct. Anal. \textbf{9} (1972), 306-321.


\bibitem{T2} M. Takesaki,
\emph{Theory of Operator Algebras II},
Encyclopedia of Mathematical Sciences 125, Springer-Verlag Berlin--Heidelberg--New York, 2003.

\bibitem{Treves}
F.~Treves, 
\emph{Topological vector spaces, distributions and kernels}, 
Academic press, 1967.

\bibitem{Vasy} 
A.~Vasy, 
``The wave equation on asymptotically anti-de Sitter spaces,'' 
Anal. PDE \textbf{5} (2012), 81-144.

\bibitem{War13}
C.~M.~Warnick, 
``The Massive Wave Equation in Asymptotically AdS Spacetimes,'' Comm. Math. Phys. \textbf{321}, no. 1 (2013), 85-111.


\bibitem{Willard}
S.~Willard,
\emph{General topology},
Dover Publications, Inc., Mineola, NY, 2004. xii+369 pp.

\bibitem{Witten}
E.~Witten, 
``Anti-de Sitter space and holography,'' Adv. Theor. Math. Phys. 2 (1998), 253-291.

\bibitem{Wrochna} 
M.~Wrochna, 
``The holographic Hadamard condition on asymptotically anti-de Sitter
spacetimes,'' 
Lett. Math. Phys. \textbf{107} (2017), 2291-331.

\bibitem{XZ}
M.~Xu and H.~Zhong
``Adding the algebraic Ryu-Takayanagi formula to the algebraic reconstruction theorem,'' arXiv:2411.06361v2.

\bibitem{YG09}
K.~Yagdjian and A.~Galstian, 
``The Klein-Gordon equation in Anti-de Sitter spacetime,''
Rend. Sem. Mat. Univ. Pol. Torino, 67:291-292, (2009).

\bibitem{Yngvason:2004uh}
J.~Yngvason,
``The Role of type III factors in quantum field theory,''
Rept. Math. Phys. \textbf{55}, 135-147 (2005).



\end{thebibliography}
\end{document}